\definecolor{refkey}{rgb}{1,1,1}
\definecolor{labelkey}{rgb}{0,0,1}
\DeclareMathAlphabet{\mathpzc}{OT1}{pzc}{m}{it}
\DeclareMathOperator{\Fix}{Fix} 
\def\e{\varepsilon}
\def\d{\displaystyle}
\newcommand{\mint}{- \kern -,375cm\displaystyle\int}
\def\deb{\rightharpoonup}
\def\ov{\overline}
\def\md{\medskip}
\def\sm{\smallskip}
\def\bg{\bigskip}
\def\K{\mathcal K}
\def\P{\mathcal P}
\def\C{\mathcal C}
\def\D{\mathcal D}
\def\B{\mathcal B}
\def\f{\varphi}
\def\l{\left}
\def\ri{\rightarrow}
\def\r{\right}
\def\np{\noindent}
\def\beq{\begin{eqnarray}}
\def\eeq{\end{eqnarray}}
\def\dis{\displaystyle}
\def\widetilde{\mathaccent"0365}
\def\erre{\mathbb{R}}
\newcommand{\enne}{\mathbb{N}}
\newcommand{\T}{\mathpzc T}
\newtheorem{theorem}{Theorem}[section]
\newtheorem{definition}[theorem]{Definition}
\newtheorem{lemma}[theorem]{Lemma}
\newtheorem{proposition}[theorem]{Proposition}
\numberwithin{equation}{section}
\begin{document}
\author[I.~Benedetti]{Irene Benedetti}
\address{Irene Benedetti, Department of Mathematics and Computer Science,
 University of Perugia, Via Luigi Vanvitelli, 1, I-06123 Perugia, Italy.}
\email{irene.benedetti@unipg.it}
 \author[P.~Rubbioni]{Paola Rubbioni}
\address{Paola Rubbioni, Department of Mathematics and Computer Science,
 University of Perugia, Via Luigi Vanvitelli, 1, I-06123 Perugia, Italy.}
 \email{paola.rubbioni@unipg.it}

\title[]{Impulsive delay differential inclusions  applied to optimization problems}

\date{}

 \begin{abstract}
 We study a class of semilinear impulsive differential inclusions with infinite delay in Banach spaces. The model incorporates multivalued nonlinearities, impulsive effects, and infinite memory, allowing for the description of systems influenced by long-lasting past states and sudden changes. We prove the existence of mild solutions and the compactness of the solution set using fixed point methods and measures of noncompactness. The theoretical results are applied to an abstract optimization problem and to a population dynamics model.
\end{abstract}

\maketitle

{\em MSC2020}: 34G25; 
34K09; 
34K30; 
34K35; 
34K45 

{\em Keywords}: infinite delay; phase space; impulses; differential inclusions; optimal solutions

\section{Introduction}

\noindent In the present paper we consider the following class of semilinear impulsive differential inclusions with infinite delay in a Banach space $E$
$$
(P) \left\{
\begin{array}{l} y'(t)\in A(t)y(t)+ F(t,y(t),y_t)\;, \;\mbox{
a.e. } t\in \, [t_0,T]\ ,\, t\neq
t_k, \, k=1,\dots,m\\
\\
y(t)=\varphi^*(t)\;,\; t\le t_0\\
\\
y(t_k^+)=y(t_k)+I_k(y_{t_k})\;,\;k=1,\dots,m 
\end{array}\right.
$$
where $\{A(t)\}_{t\in [t_0,T]}$ 
is a family of linear operators defined on the same domain $D(A)$; 
$F:[t_0,T] \times E\times \B \to \P(E)$ is a given multivalued map; 
$\B$ is a phase space that will be defined later; 
$y_t(\theta)=y(t+\theta),\,\theta \in(-\infty,0]$, belongs to $\B$ and represents the history of the state up to the time $t\in [t_0,T]$; $t_0<t_1<\cdots<t_m<t_{m+1}=T$ denote the impulse times, associated with impulse functions $I_k:\B\to E$, $k=1,\cdots, m$; $\varphi^*:\,(-\infty,t_0]\to E$ is a given function such that $\varphi^*_{t_0}\in \B$, and represents the initial state of the system; $y(t^+):=\lim_{s\to t^+}y(s)$.

\noindent Differential equations with infinite delay, also referred to as functional differential equations (FDEs) or delay differential equations (DDEs) with infinite delay, model systems whose evolution at a given time depends not only on the present state but also on the entire past history extending to minus infinity. Such equations are particularly relevant in applications where past states exert long-lasting effects. For instance, in ecosystems with extremely slow resource regeneration or in environments affected by long-lived, bioaccumulative pollutants, assuming a finite delay may neglect the persistent influence of historical resource depletion or past contamination on current population dynamics. Infinite-delay models therefore provide a more faithful mathematical representation of systems in which past effects do not fade within any finite time horizon.

\noindent For the above reasons, starting from the pioneering work of Hale and Kato \cite{hk}, equations with infinite delay have been the subject of extensive research over the past decades. Comprehensive treatments of this theory can be found in the monographs \cite{hmn} and \cite{LWZ}. Further significant contributions have been made by Henriquez and collaborators; see, for example, \cite{H1,H2,H3}. In \cite{gorr02}, the authors investigate semilinear functional differential inclusions in a Banach space with infinite delay on an abstract phase space, while in \cite{Fan2008} these results are extended to the fully nonlinear setting. The topological structure of the solution set for abstract inclusions with infinite delay, covering both semilinear and quasilinear cases, is studied in \cite{Guedda}. More recently, the problem of viability for semilinear differential equations with infinite delay has been addressed in \cite{EzSt25}.

\noindent Another relevant line of research concerns differential equations with infinite delay subject to impulsive effects. Such models account for dynamics in which the solution may exhibit discontinuities in response to sudden changes in the system. Applications include, for instance, financial models capturing abrupt market shocks or agricultural systems affected by the sudden introduction of chemical agents or extreme atmospheric events. The existence of mild solutions for impulsive semilinear functional differential inclusions with state-dependent delay and multivalued jumps in Banach spaces was established in \cite{BZ}. Related recent results can be found in \cite{Benchora2024}, where the existence of periodic mild solutions for a class of impulsive integro-differential inclusions with infinite delay is studied using fixed point techniques combined with measures of noncompactness and the resolvent operator. The existence of mild solutions for impulsive partial neutral functional differential equations of first and second order is addressed in \cite{Hernandez2007}. We also refer the reader to the monograph \cite{gho}, which provides a comprehensive treatment of differential inclusions with infinite delay and impulsive effects in a finite-dimensional framework. Still within a finite-dimensional setting, more recently, existence and uniqueness results for retarded equations with non-instantaneous impulses, nonlocal conditions, and infinite delay have been established in \cite{Leiva2023}. 

\noindent In conclusion, problem $(P)$ is formulated in a highly general setting and is applicable to a wide range of real-world phenomena. It incorporates, within a unified framework, a multivalued term, allowing for the presence of measurement uncertainties, impulsive effects, which model sudden environmental changes, and an infinite delay, accounting for situations in which past events, even in the distant past, influence the current state. In this paper, we establish the existence of at least one mild solution (see Definition \ref{def-mild-sol}) and the compactness of the solution set, and subsequently apply these results to an abstract optimization problem. The work concludes with an application to the study of optimal solutions in population dynamics. The results are obtained using fixed point techniques combined with regularity assumptions expressed in terms of measures of noncompactness. A closely related problem, addressed using similar methods, was analyzed in the previously cited paper \cite{BZ}, where, in contrast to the present work, the nonlinear term depends only on the delay. Here, we allow the nonlinearity to depend also on the state function and prove the existence of a solution without imposing any assumptions on the impulse functions. 

\noindent The paper is organized as follows: in Section \ref{Preliminaries} we recall some known definitions and results useful in the sequel of the manuscript; in Section \ref{phase-space} we describe the phase space that we consider both in the abstract setting and in the application; in Section \ref{s-P} are collected the main assumptions required on the problem $(P)$, the definition of the solution and some preliminary results needed to prove the main results; in Section \ref{main} the non emptyness and compactness of the solution set is proven; finally, in Section \ref{s-A} an application of the theoretical results to a model in population dynamics is presented.

\section{Preliminaries}
\label{Preliminaries}

Let $X$, $Y$, be two topological vector spaces. We denote by $\P(Y)$ the family of all non-empty subsets
of $Y$ and define
$$
\K(Y)=\l\{C\in\P(Y),\,\mbox{compact}\r\}\ ; \qquad \K v(Y)=\l\{D\in\P(Y),\,\mbox{compact and convex}\r\}\ .
$$

\noindent A multivalued map $F:X\ri \P(Y)$ is said to be:
\begin{itemize}
\item[-] {\em upper semicontinuous}  (for shortness u.s.c.) if $F^{-1}(V)=\l\{x\in X:F(x)\subset V\r\}$ is an open subset of $X$ for every open $V\subseteq Y$;
\item[-] {\em closed}  if its graph $G_F=\l\{(x,y)\in X\times Y:\,y\in F(x)\r\}$ is a closed subset of $X\times Y$.
\end{itemize}

\noindent For u.s.c. multimaps the following result holds (see, e.g.\cite{g}).
\begin{proposition}
\label{usc2} 
Let $F:X\ri\K(Y)$ be an u.s.c. multimap. If $C\subset X$ is a compact set then its image $F(C)$ is a compact subset of $Y$.
\end{proposition}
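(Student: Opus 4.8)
The plan is to work directly from the open-cover characterisation of compactness, exploiting the two compactness hypotheses at two different stages. Recall that $F(C)=\bigcup_{x\in C}F(x)$. I would begin by fixing an arbitrary open cover $\{V_\alpha\}_{\alpha\in A}$ of $F(C)$, so that $F(C)\subseteq\bigcup_{\alpha\in A}V_\alpha$, and my goal is to extract a finite subcover.

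The first key step uses the compactness of the values of $F$. For each $x\in C$ we have $F(x)\in\K(Y)$ and $F(x)\subseteq F(C)\subseteq\bigcup_{\alpha\in A}V_\alpha$; hence by compactness of $F(x)$ there is a finite index set $A_x\subseteq A$ with $F(x)\subseteq W_x:=\bigcup_{\alpha\in A_x}V_\alpha$. Each $W_x$ is open as a finite union of open sets. The second key step invokes upper semicontinuity: since $F(x)\subseteq W_x$ we have $x\in F^{-1}(W_x)$, and by u.s.c. the set $F^{-1}(W_x)=\l\{z\in X:F(z)\subseteq W_x\r\}$ is open in $X$. Consequently $\{F^{-1}(W_x)\}_{x\in C}$ is an open cover of $C$.

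The final step uses the compactness of $C$. Extracting a finite subcover yields points $x_1,\dots,x_n\in C$ with $C\subseteq\bigcup_{i=1}^{n}F^{-1}(W_{x_i})$. Then for every $x\in C$ there is an index $i$ with $x\in F^{-1}(W_{x_i})$, i.e. $F(x)\subseteq W_{x_i}$, whence
$$
F(C)=\bigcup_{x\in C}F(x)\subseteq\bigcup_{i=1}^{n}W_{x_i}=\bigcup_{i=1}^{n}\bigcup_{\alpha\in A_{x_i}}V_\alpha .
$$
Since the right-hand side is a finite union of the original $V_\alpha$, this is the desired finite subcover, and $F(C)$ is compact.

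I expect the only genuinely delicate point to be the correct sequencing of the two compactness arguments: one must first collapse the cover of each fibre $F(x)$ into a single open set $W_x$ before applying upper semicontinuity, since the definition $F^{-1}(W)$ open requires a single open target set containing $F(x)$, not a cover. Everything else is routine set-theoretic bookkeeping; no separation axioms beyond those implicit in the open-cover definition of compactness are needed, so the argument carries over unchanged to the topological-vector-space setting of the statement.
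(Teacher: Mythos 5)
Your proof is correct and complete: it is the standard open-cover argument, correctly sequenced (first shrink the cover of each value $F(x)$ to a single open set $W_x$, then apply upper semicontinuity to get openness of $F^{-1}(W_x)$, then use compactness of $C$), and it matches the definition of u.s.c.\ used in the paper. The paper itself does not prove this proposition but merely cites it from G\'orniewicz's monograph, and your argument is precisely the standard proof found there, so there is nothing to compare beyond noting that your write-up fills in what the paper leaves as a reference.
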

\bg

Let $E$ be a real Banach space.

\np If $(N,\geq)$ is a partially ordered set, we recall that a map $\beta:\P(E)\ri N$ is said to be a {\em measure of non
compactness} (MNC) in $E$ if
\begin{equation*}
\beta(\ov {co}\Omega)=\beta(\Omega)
\end{equation*}
for every $\Omega\in \P(E)$ (see, e.g. \cite{KOZ} for details).

\np A measure of non compactness $\beta$ is called:
\begin{itemize}
\item[(i)] monotone if $\Omega_0,\,\Omega_1\in\P(E)$, $\Omega_0
\subseteq \Omega_1$ imply
  $\beta(\Omega_1)\geq\beta(\Omega_0)$;
\item[(ii)] nonsingular if
$\beta(\l\{c\r\}\cup\Omega)=\beta(\Omega)$
  for every $c\in E$, $\Omega\in \P(E)$;
\item[(iii)] real if $N = [0,+\infty]$ with the natural ordering
and $\beta(\Omega)
 < +\infty$ for every bounded $\Omega$;
\item[(iv)] regular if $\beta(\Omega)=0$ is equivalent to the
relative compactness of $\Omega$.
\end{itemize}

\np A well known example of measure of non compactness satisfying all of the above properties is the Hausdorff MNC
\begin{equation*}
\chi(\Omega)=\d\inf\l\{\e>0:\Omega\,\mbox{has a finite
$\e$-net}\r\}.
\end{equation*}

If $X$ is a subset of $E$ and $\Lambda$ a space of parameters, a multimap $F:X\ri\K(E)$, or a family of multimaps $G:\Lambda\times X\ri\K(E)$, is called {\em condensing} relative to a MNC $\beta$, or $\beta$-condensing, if for every $\Omega\subseteq X$ that is not relatively compact we have, respectively
\begin{equation*}
\beta(F(\Omega))\ngeq \beta(\Omega)\ \mbox{ or }\
\beta(G(\Lambda\times\Omega))\ngeq\beta(\Omega)\ .
\end{equation*}

The proof of the main result is based on the following Fixed Point Theorem.
\begin{theorem}[\cite{KOZ}, Corollary 3.3.1]
\label{fixedpoint}
Let $M$ be a convex and closed subset of $E$, $F:M\ri\K v(M)$ be a closed, $\beta$-condensing multimap, with $\beta$ a nonsingular MNC defined on subsets of $E$, then $Fix\, F \neq \emptyset$.
\end{theorem}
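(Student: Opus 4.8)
The plan is to reduce the statement to the classical compact setting and then invoke a multivalued analogue of Schauder's theorem (the Bohnenblust--Karlin fixed point theorem). The reduction is carried out by producing a nonempty compact convex subset $K\subseteq M$ that is invariant under $F$, i.e. $F(K)\subseteq K$; the measure of noncompactness enters precisely in the proof that such a $K$ is compact, after which the fixed point is obtained on $K$ by a standard argument.

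First I would fix a base point $x_0\in M$ and consider the family $\Sigma$ of all nonempty closed convex subsets $C\subseteq M$ satisfying $x_0\in C$ and $F(C)\subseteq C$. Since $F$ takes values in $\K v(M)$ we have $F(M)\subseteq M$, so $M\in\Sigma$ and $\Sigma\neq\emptyset$. Set $K=\bigcap_{C\in\Sigma}C$. Using that $A\mapsto F(A)=\bigcup_{x\in A}F(x)$ is monotone with respect to inclusion, one checks that $K$ is closed, convex, contains $x_0$, and satisfies $F(K)\subseteq K$, so it is the minimal element of $\Sigma$. I would then establish the sharper identity $K=\ov{co}(F(K)\cup\{x_0\})$. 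Indeed, $K_1:=\ov{co}(F(K)\cup\{x_0\})$ is contained in $K$ because $K$ is closed and convex and contains both $F(K)$ and $x_0$; conversely $K_1$ belongs to $\Sigma$, since $x_0\in K_1$ and $F(K_1)\subseteq F(K)\subseteq K_1$, so minimality of $K$ forces $K\subseteq K_1$. Hence $K=K_1$.

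The crux is then a short computation with the MNC. Applying the defining property $\beta(\ov{co}\,\Omega)=\beta(\Omega)$ and the nonsingularity $\beta(\{c\}\cup\Omega)=\beta(\Omega)$ to the identity above yields
$$\beta(K)=\beta(\ov{co}(F(K)\cup\{x_0\}))=\beta(F(K)\cup\{x_0\})=\beta(F(K)).$$
In particular $\beta(F(K))\geq\beta(K)$. Were $K$ not relatively compact, the $\beta$-condensing hypothesis would give $\beta(F(K))\ngeq\beta(K)$, a contradiction. Hence $K$ is relatively compact, and being closed it is compact; moreover it is nonempty, as $x_0\in K$.

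Finally I would restrict to $K$. The map $F|_K:K\to\K v(K)$ has nonempty compact convex values and a closed graph; since its range lies in the compact set $K$, closedness of the graph upgrades to upper semicontinuity by the standard fact that a closed multimap with relatively compact range is u.s.c. The Bohnenblust--Karlin theorem applied to $F|_K$ on the compact convex set $K$ then provides $x^*\in K$ with $x^*\in F(x^*)$, which is the desired fixed point in $M$. The main obstacle is the construction and compactness of $K$: everything hinges on extracting the self-reproducing identity $K=\ov{co}(F(K)\cup\{x_0\})$, which is what lets the MNC axioms and the condensing condition be played against each other. The concluding appeal to Bohnenblust--Karlin is then routine once compactness of $K$ is in hand.
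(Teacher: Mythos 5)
The paper does not prove this statement: it is imported verbatim as Corollary 3.3.1 of the cited monograph of Kamenskii--Obukhovskii--Zecca, so there is no in-paper argument to compare against. Your proof is correct and is essentially the classical one used in that reference: the minimal closed convex set $K=\ov{co}(F(K)\cup\{x_0\})$ is their notion of a fundamental set, the chain $\beta(K)=\beta(F(K)\cup\{x_0\})=\beta(F(K))$ combined with the condensing hypothesis forces compactness of $K$, and the conclusion follows from the fixed point theorem for u.s.c.\ compact convex-valued multimaps on a compact convex set, with the closed-graph-plus-compact-range upgrade to upper semicontinuity correctly invoked.
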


The following property of the fixed points set of $F$ will be useful in the sequel.

\begin{proposition}[\cite{KOZ}, Proposition 3.5.1]
\label{fixedpoint}
Let $M$ be a closed subset of $E$, $F:M\ri\K(E)$ a closed multimap, which is $\beta$-condensing on every bounded subset of $M$, with $\beta$ a monotone MNC defined on subsets of $E$. If $Fix\, F=\{x\in M:\,x\in F(x)\}$ is bounded, then it is compact.
\end{proposition}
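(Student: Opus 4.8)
The plan is to prove that $\Sigma := \Fix F$ is simultaneously closed and relatively compact in $E$; since a closed, relatively compact subset of the complete space $E$ is compact, this yields the assertion. Note at the outset that $\Sigma = \{x\in M : x\in F(x)\} \subseteq M$ and that $\Sigma$ is bounded by hypothesis, so $\Sigma$ is a bounded subset of $M$ and the $\beta$-condensing assumption is applicable to it.

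First I would verify that $\Sigma$ is closed, using only the closed-graph property of $F$ together with the closedness of $M$. Given a sequence $(x_n)\subseteq\Sigma$ with $x_n\to x$ in $E$, the closedness of $M$ gives $x\in M$, while $x_n\in F(x_n)$ means $(x_n,x_n)\in G_F$; passing to the limit $(x_n,x_n)\to(x,x)$ and invoking the closedness of $G_F$ we obtain $(x,x)\in G_F$, that is $x\in F(x)$, so $x\in\Sigma$. Hence $\Sigma$ is closed.

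The heart of the proof is relative compactness, where the monotone MNC and the condensing condition interact. The crucial observation is the self-inclusion $\Sigma\subseteq F(\Sigma)$: for each $x\in\Sigma$ one has $x\in F(x)\subseteq F(\Sigma)$. Monotonicity of $\beta$ then gives $\beta(\Sigma)\le\beta(F(\Sigma))$. Arguing by contradiction, if $\Sigma$ were not relatively compact, then the $\beta$-condensing hypothesis applied to the bounded set $\Sigma\subseteq M$ would yield $\beta(F(\Sigma))\ngeq\beta(\Sigma)$, contradicting the inequality just obtained; hence $\Sigma$ is relatively compact. Combining this with the closedness established above, $\Sigma=\ov\Sigma$ is compact, which is the claim.

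I expect the only genuinely delicate point to be spotting the self-inclusion $\Sigma\subseteq F(\Sigma)$ and converting it, through monotonicity, into the inequality $\beta(F(\Sigma))\ge\beta(\Sigma)$ that clashes with the condensing condition. Everything else is a direct unwinding of the definitions; in particular, the compactness of the individual values $F(x)$ plays no explicit role beyond guaranteeing that $F(\Sigma)$ is a legitimate argument for $\beta$, and the boundedness hypothesis on $\Fix F$ is used precisely to license the application of the condensing property on bounded sets.
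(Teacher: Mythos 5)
Your proof is correct and is exactly the standard argument (the paper itself does not prove this statement but cites it from Kamenskii--Obukhovskii--Zecca, Proposition 3.5.1, where the same reasoning is used): closedness of $\Fix F$ from the closed graph of $F$ and the closedness of $M$, and relative compactness from the self-inclusion $\Fix F\subseteq F(\Fix F)$ combined with monotonicity of $\beta$ and the condensing property on the bounded set $\Fix F$. Nothing is missing.
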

\sm

Let $[a,b]$ be an interval of the real line. By the symbol ~$L^1([a,b];E)$~ we denote the space of all Bochner summable functions and, for simplicity of notations, we write ~$L^1_+([a,b])$~ instead of $L^1([a,b];\erre^+)$.\sm

\np A multifunction ~${G}: [a,b] \to \K(E)$~ is said to be
\begin{itemize}
\item[-] {\em integrable}\, if it has a summable selection ~$g \in L^1([a,b];E)$; 
\item[-] {\em integrably bounded}\, if there exists a summable function ~$\omega(\cdot) \in L^1_+([a,b])$~ such that
$$
\|{G}(t)\| := \sup \{\|g\| ~:~ g \in {G}(t)\} \leq
\omega (t), \quad \mbox{ a.e. } t\in [a,b].
$$
\end{itemize}

Finally, a countable set ~$\{f_n\}_{n=1}^{\infty} \subset L^1([a,b];E)$~ is said to be {\em semicompact} if:
\begin{itemize}
\item[(i)] it is integrably bounded: ~$\|f_n(t)\| \leq \omega(t)$~for a.e. $t \in [a,b]$ and every ~$n \geq 1$~, where $\omega(\cdot) \in L^1_+([a,b])$; 
\item[(ii)] the set~$\{f_n(t)\}_{n=1}^{+\infty}$~ is relatively compact for a.e. $t\in [a,b].$
\end{itemize}
\begin{theorem}[\cite{Diestel}, Corollary 2.6]
            \label{wcomp}
            Let
            $
            \mathcal{F} \subset L^1([a,b];E)
            $
            be an integrable bounded family of functions such that $\mathcal{F}(t)=\left \{ f(t), f \in D \right \}$ is weakly relatively compact for a.e. $t \in [a,b].$
            Then $\mathcal{F}$ is weakly relatively compact in $L^1([a,b];E)$.
            \end{theorem}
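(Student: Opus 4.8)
The plan is to obtain the assertion from the Dunford--Pettis characterisation of weakly relatively compact subsets of $L^1([a,b];E)$. For a finite measure space this characterisation rests on two ingredients: the family must be uniformly integrable, and its values must be suitably confined to weakly compact sets. The second ingredient is precisely the standing hypothesis that $\mathcal{F}(t)$ is weakly relatively compact for a.e.\ $t$, so the only preliminary work is to read off uniform integrability from the integrable boundedness of $\mathcal{F}$.

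First I would verify uniform integrability. Every $f\in\mathcal{F}$ satisfies $\|f(t)\|\le\omega(t)$ for a.e.\ $t$ with a single $\omega\in L^1_+([a,b])$, so for each measurable $A\subseteq[a,b]$ one has $\int_A\|f\|\,dt\le\int_A\omega\,dt$, a bound independent of $f$. By the absolute continuity of the integral of the summable function $\omega$, the right-hand side tends to $0$ as $|A|\to0$; hence $\int_A\|f\|\,dt\to0$ uniformly over $\mathcal{F}$, and in particular $\mathcal{F}$ is bounded in $L^1$ with $\|f\|_{L^1}\le\|\omega\|_{L^1}$.

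Next, by the Eberlein--\v{S}mulian theorem it suffices to show that every sequence $(f_n)\subset\mathcal{F}$ has a weakly convergent subsequence. For a.e.\ $t$ the terms $f_n(t)$ lie in the weakly compact set $\ov{\mathcal{F}(t)}^{\,w}$, so the values of the sequence are pointwise trapped in weakly compact sets. Together with the uniform integrability above, this is exactly the situation covered by the vector-valued weak-compactness criterion: testing against functionals $x^*\in E^*$, each scalar sequence $t\mapsto\langle x^*,f_n(t)\rangle$ is dominated by $\|x^*\|\,\omega$ and hence, by the scalar Dunford--Pettis theorem, is weakly relatively compact in $L^1([a,b])$; a diagonal extraction together with Mazur's lemma then upgrades these scalar weak limits to a weak limit in $E$ for a.e.\ $t$, and uniform integrability (via Vitali) transfers this to weak convergence in $L^1([a,b];E)$.

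The main obstacle is precisely this last passage from pointwise weak compactness to global weak compactness, which is genuinely delicate when $E$ is infinite-dimensional: a bounded, uniformly integrable, pointwise norm-bounded family need not be weakly compact, and the role of the extra hypothesis on $\mathcal{F}(t)$ is exactly to repair this defect. The difficulty is compounded by the fact that the dual of $L^1([a,b];E)$ need not be $L^\infty([a,b];E^*)$ unless $E^*$ enjoys the Radon--Nikodym property, so the reduction to scalar testing must be carried out with care. These points are settled by the convexity and selection machinery underlying the cited corollary, which I would invoke to conclude.
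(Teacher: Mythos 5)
First, a point of comparison: the paper does not prove this statement at all --- it is imported verbatim as Corollary 2.6 of the cited Diestel--Ruess--Schachermayer article --- so there is no internal proof to measure your attempt against; what follows is an assessment of the proposal on its own terms. The easy half is fine: uniform integrability and $L^1$-boundedness do follow from integrable boundedness exactly as you argue. But the core of the theorem is not proved, and the scalarization step cannot carry the weight you place on it. For every $x^*\in E^*$ the family $\{\langle x^*, f(\cdot)\rangle : f\in\mathcal{F}\}$ is dominated by $\|x^*\|\,\omega$ and is therefore weakly relatively compact in scalar $L^1$ by Dunford--Pettis for \emph{any} integrably bounded family, without ever using the hypothesis on $\mathcal{F}(t)$. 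Yet integrably bounded families need not be weakly relatively compact in $L^1([a,b];E)$: take $E=\ell^1$ and the constant functions $f_n\equiv e_n$. They are dominated by $\omega\equiv 1$ and every scalarization is weakly relatively compact in $L^1([a,b])$, but testing against the constant functionals $e_j^*$ forces any weak limit to vanish, while testing against $(1,1,1,\dots)\in\ell^\infty$ gives the constant value $b-a$; hence no subsequence converges weakly in $L^1([a,b];\ell^1)$. So an argument that only tests against functionals can never reach the conclusion; the pointwise hypothesis must enter through a genuinely vector-valued mechanism.

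The specific step ``a diagonal extraction together with Mazur's lemma then upgrades these scalar weak limits to a weak limit in $E$ for a.e.\ $t$'' is not a valid inference: weak convergence of $\langle x^*, f_n(\cdot)\rangle$ in $L^1([a,b])$ gives no pointwise a.e.\ information; the diagonalization over $x^*$ would require a countable norming subset of $E^*$, which is not available without separability assumptions; and Mazur's lemma produces norm-convergent convex combinations rather than weak limits of the original sequence. What actually closes the argument in the cited reference is a subsequence-splitting, Koml\'os-type lemma that uses the a.e.\ weak relative compactness of $\mathcal{F}(t)$ to extract convex blocks converging a.e.\ in norm, after which Vitali applies. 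Your final paragraph defers precisely this step to ``the convexity and selection machinery underlying the cited corollary'', which is circular, since that corollary is the statement to be proved. In short: the reductions are correct, but the proof of the theorem itself is missing; for the purposes of this paper it is legitimate simply to quote the result, as the authors do.
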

\begin{proposition}[\cite{CR-2005}, Lemma 2]
\label{p-t}
Let $\{U(t,s)\}$ be an evolution system on $E$ and $G:L^1([a,b];E) \to C([a,b];E)$ be the operator defined as
\begin{equation}
    \label{G}
    G(f)(t)=\displaystyle\int_a^b U(t,s)f(s)\,ds.
\end{equation}
Then, for every semicompact sequence $\{f_n\}_{n=1}^{\infty}$ the sequence $\{Gf_n\}_{n=1}^{\infty}$ is relatively compact and moreover if $f_n \rightharpoonup f_0$, then $G(f_n) \to G(f_0)$.
\end{proposition}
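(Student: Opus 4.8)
The plan is to treat the two assertions separately: first I would obtain the relative compactness of $\{G(f_n)\}$ in $C([a,b];E)$ via the Arzel\`a--Ascoli theorem, and then deduce the convergence $G(f_n)\to G(f_0)$ by identifying the limit through a weak-convergence argument combined with that compactness. Throughout I will use two structural facts. On the one hand, an evolution system on the compact region $\{a\le s\le t\le b\}$ is uniformly bounded, say $\|U(t,s)\|\le M$, and strongly continuous in $(t,s)$ (recall that $U(t,s)$ acts for $a\le s\le t$, so the integral in \eqref{G} effectively runs over $[a,t]$). On the other hand, semicompactness of $\{f_n\}$ provides an $\omega\in L^1_+([a,b])$ with $\|f_n(s)\|\le\omega(s)$ for a.e.\ $s$ and all $n$, together with the relative compactness of $\{f_n(s)\}_{n}$ in $E$ for a.e.\ $s$.

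To establish pointwise relative compactness, fix $t$ and set $g_n(s):=U(t,s)f_n(s)$. Since $U(t,s)$ is a bounded, hence continuous, linear operator, it maps the compact set $\overline{\{f_n(s):n\ge1\}}$ to a compact set (Proposition \ref{usc2}), so $\{g_n(s)\}_n$ is relatively compact for a.e.\ $s$; moreover $\|g_n(s)\|\le M\omega(s)$, so $\{g_n\}$ is integrably bounded. The standard estimate for the Hausdorff measure of noncompactness of a vector integral (see \cite{KOZ}) then yields $\chi(\{G(f_n)(t)\}_n)\le\int_a^t\chi(\{g_n(s)\}_n)\,ds=0$, whence $\{G(f_n)(t)\}_n$ is relatively compact in $E$ by regularity of $\chi$.

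Equicontinuity is the step I expect to be the main obstacle. For $t_1<t_2$ I would split the increment $G(f_n)(t_2)-G(f_n)(t_1)$ into the boundary contribution $\int_{t_1}^{t_2}U(t_2,s)f_n(s)\,ds$, whose norm is at most $M\int_{t_1}^{t_2}\omega(s)\,ds$ and hence tends to $0$ uniformly in $n$ by absolute continuity of the integral, and the term $\int_a^{t_1}[U(t_2,s)-U(t_1,s)]f_n(s)\,ds$. To bound the latter uniformly in $n$, put $K_s:=\overline{\{f_n(s):n\ge1\}}$, which is compact for a.e.\ $s$, so that $\sup_n\|[U(t_2,s)-U(t_1,s)]f_n(s)\|\le\sup_{x\in K_s}\|[U(t_2,s)-U(t_1,s)]x\|$. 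For fixed $s$, strong continuity of the evolution system together with the uniform bound $\|U(t,s)\|\le M$ forces $U(t_2,s)\to U(t_1,s)$ uniformly on the compact set $K_s$ as $t_2\to t_1$, so the right-hand side tends to $0$ pointwise in $s$; being dominated by $2M\omega(s)\in L^1$, dominated convergence gives $\sup_n\|\int_a^{t_1}[U(t_2,s)-U(t_1,s)]f_n(s)\,ds\|\to0$. This proves equicontinuity, and together with the pointwise compactness the Arzel\`a--Ascoli theorem yields that $\{G(f_n)\}$ is relatively compact in $C([a,b];E)$.

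Finally, assume $f_n\rightharpoonup f_0$ in $L^1([a,b];E)$. For each fixed $t$ the map $f\mapsto G(f)(t)$ is linear and bounded from $L^1([a,b];E)$ to $E$ (with norm at most $M$), hence weak-to-weak continuous; therefore $G(f_n)(t)\rightharpoonup G(f_0)(t)$ for every $t$. By the relative compactness already proved, every subsequence of $\{G(f_n)\}$ admits a further subsequence converging uniformly to some $h\in C([a,b];E)$; for that subsequence $G(f_{n_k})(t)\to h(t)$ strongly, a fortiori weakly, so uniqueness of weak limits forces $h(t)=G(f_0)(t)$ for all $t$, i.e.\ $h=G(f_0)$. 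Since every subsequence thus has a sub-subsequence converging to the same limit $G(f_0)$, the whole sequence converges, $G(f_n)\to G(f_0)$ in $C([a,b];E)$. The delicate points to check carefully are the measurability of $s\mapsto\sup_{x\in K_s}\|[U(t_2,s)-U(t_1,s)]x\|$ (handled through separability of $K_s$) and the passage from strong operator convergence to uniform convergence on compact sets.
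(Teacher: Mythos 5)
Your argument is correct and is essentially the standard proof of this result: the paper itself gives no proof here but simply imports it from \cite{CR-2005} (Lemma 2), whose argument likewise rests on pointwise relative compactness via the Hausdorff-MNC estimate for the integral, equicontinuity from the strong continuity and uniform boundedness of the evolution operator on compact sets, Arzel\`a--Ascoli, and identification of the limit through the weak-to-weak continuity of $f\mapsto G(f)(t)$. The two delicate points you flag (measurability of the supremum, which reduces to a countable supremum of measurable functions, and the passage from strong operator convergence to uniform convergence on compacta) are handled correctly, so there is nothing to repair.
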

The Heine Cantor Theorem is of a key importance in order to prove the existence of a integrable selection of the multivalued map $F$, thus bearing in mind that the space $\B$ is not a metric space, we recall the following generalization of it for non metric spaces.
\begin{theorem}[see e.g. \cite{W}, Corollary 36.20] 
\label{uniform-cont}
Every continuous function on a compact $T_2$ space with values in a pseudometric space is uniformly continuous.
\end{theorem}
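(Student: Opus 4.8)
The plan is to prove the statement through the canonical uniform structure carried by a compact Hausdorff space. Recall that a compact $T_2$ space $X$ admits a unique uniformity compatible with its topology, and that its entourages are exactly the neighborhoods of the diagonal $\Delta=\{(x,x):x\in X\}$ in $X\times X$; with respect to this uniformity and to the pseudometric uniformity on the codomain $(M,d)$, the assertion that $f$ is \emph{uniformly continuous} means precisely that for every $\varepsilon>0$ the set $\{(x,x'):d(f(x),f(x'))<\varepsilon\}$ is a neighborhood of $\Delta$. This reformulation is the conceptual core of the argument, since it replaces the missing metric on $X$ by a structure that still renders ``uniformity'' meaningful; it is exactly here that the $T_2$ hypothesis enters, because a compact Hausdorff space is normal, hence completely regular and thus uniformizable, while compactness forces the uniformity to be unique.

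Granting this reformulation, the estimate itself is a direct compactness argument modeled on the classical Heine--Cantor proof. First I would fix $\varepsilon>0$ and use the continuity of $f$ to attach to each point $x\in X$ an open neighborhood $V_x\ni x$ with $f(V_x)\subseteq B_d(f(x),\varepsilon/2)$. The family $\{V_x\}_{x\in X}$ is an open cover of $X$, so by compactness there exist finitely many points $x_1,\dots,x_n$ with $X=\bigcup_{i=1}^n V_{x_i}$. I would then set
\[
U=\bigcup_{i=1}^n V_{x_i}\times V_{x_i},
\]
which is open in $X\times X$ and contains $\Delta$ (any $z\in X$ lies in some $V_{x_i}$, whence $(z,z)\in V_{x_i}\times V_{x_i}$), so $U$ is an entourage by the characterization above.

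It remains to check that $U$ witnesses uniform continuity at level $\varepsilon$. If $(x,x')\in U$, then both $x$ and $x'$ belong to a common $V_{x_i}$, so $d(f(x),f(x_i))<\varepsilon/2$ and $d(f(x'),f(x_i))<\varepsilon/2$, and the triangle inequality for the pseudometric $d$ yields $d(f(x),f(x'))<\varepsilon$. Since $\varepsilon>0$ was arbitrary, $f$ is uniformly continuous.

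I expect the main obstacle to be not the finite-subcover estimate, which is routine, but the topological input that legitimizes the whole scheme: the fact that on a compact Hausdorff space the neighborhoods of the diagonal constitute the entourage filter of the unique compatible uniformity. Establishing this rests on normality and compactness (one must produce, for each neighborhood $N$ of $\Delta$, a symmetric neighborhood $V$ with $V\circ V\subseteq N$), and it is precisely this step that breaks down for non-Hausdorff or non-compact domains. Since the result is classical, in the paper it is simply quoted from \cite{W}; the sketch above exhibits the mechanism behind it and clarifies why the $T_2$ assumption cannot be dropped.
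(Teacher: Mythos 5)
The paper does not prove this statement; it is quoted directly from Willard (Corollary 36.20), so there is no internal proof to compare against. Your argument is correct and is essentially the textbook one: interpret uniform continuity via the unique compatible uniformity of a compact $T_2$ space, whose entourages are precisely the neighborhoods of the diagonal, and then run a Heine--Cantor finite-subcover argument. Two small remarks. First, as you yourself note, all the real content sits in the cited lemma that every neighborhood of $\Delta$ is an entourage (equivalently, that the neighborhood filter of $\Delta$ is a uniformity compatible with the topology); your finite-subcover step is then correct but not strictly needed, since once that lemma is granted the set $\{(x,x'): d(f(x),f(x'))<\varepsilon\}$ is already open (because $f\times f$ and $d$ are continuous) and contains $\Delta$, hence is an entourage with no covering argument at all. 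Second, note that in the paper's only application (Proposition \ref{j-property}) the domain is a compact interval, so the metric Heine--Cantor statement would suffice there; the uniform-space formulation is only needed to match the generality of the quoted theorem.
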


\noindent Let $[t_0,T]$ be a fixed interval, $t_0\ge 0$. Let $\Delta =\{(t,s)\in [t_0,T]\times [t_0,T]\, :\, t_0\le s\le t\le T\}$, we recall (see, e.g. \cite{p}) that a two parameter family $\{U(t,s)\}_{(t,s)\in\Delta}$, $U(t,s):E\to E$ bounded linear
operator, $(t,s)\in\Delta$, is called an {\em evolution system} if the following conditions are satisfied:
\begin{enumerate}
\item $U(s,s)=I,\ 0\le s\le T$\, ; \quad $U(t,r)U(r,s)=U(t,s)$,\ $t_0\le s\le r\le t\leq T$\, ; \item $(t,s)\mapsto U(t,s)$\, is strongly continuous on $\Delta$ (see, e.g. \cite{k}).
\end{enumerate}
For every evolution system, we can consider the correspondent {\em evolution operator} ~$U:\Delta\to { L}(E)$, where ${L}(E)$ is the space of all bounded linear operators in $E$.

\np We observe that, since the evolution operator $U$ is strongly continuous on the compact set $\Delta$, there exists a constant $D=D_\Delta>0$ such that \beq \label{D} \|U(t,s)\|_{{L}(E)}\le D\, , \quad (t,s)\in\Delta\ . \eeq

\section{The generalized phase space}
\label{phase-space}

The phase space for equations with infinite delay was introduced by Hale and Kato in \cite{hk}, by means of a set of axioms. It is a suitable framework for describing situations in which the past state of the system influences its present state. The possibility of incorporating an infinite delay allows one to control the “amount of past” that one needs or wishes to take into account to obtain solutions that best correspond to the problem under consideration. We refer to the monograph \cite{hmn} for a detailed discussion of phase space and of examples of spaces that satisfy its defining axioms. 

It should be noted, however, that the phase spaces described in that text and normally used in the literature are not suitable for the impulsive case, where solutions are not necessarily continuous but piecewise continuous, with jump discontinuities and left continuity. For this reason, in the impulsive case the axioms of Hale and Kato need to be slightly revisited. For functions taking values in $\mathbb{R}^n$, see the book of Graef, Henderson, and Ouahab \cite{gho}. We will follow their approach and introduce the following function set.

\begin{definition}
\label{d-B}
 Let ${\mathpzc T}=\{t_0,\dots,t_m\} \subset [0,T]$ be a set of fixed real numbers with $t_0<t_1<\cdots <t_m$, and $E$ a  Banach space. A {\em generalized phase space} $\B$ associated to ${\mathpzc T}$ is a linear space, with a seminorm $\|\cdot\|_\B$, of functions mapping $(-\infty,0]$ into $E$, satisfying the following axiom:
  
\begin{itemize}
\item[(B)] if $y:\,(-\infty, T]\to E$ is such that, for every $k=1\dots, m$, the restrictions $y_{|\, ]t_{k-1},t_k]}$ are continuous and there exist $y\big(t_k^+\big):=\displaystyle{\lim_{h\to 0^+} y(t_k+h)}\in E$,
then for every $t\in [t_0, T]$ the following conditions hold:
\begin{itemize}
\item[(B1)] $y_t \in \B$, for every $t\in [t_0,T]$;
\sm

\item[(B2)] the function $t \mapsto y_t$ is continuous on $[t_0,T]\setminus \T$; 
\sm

\item[(B3)] $\|y_t\|_\B \leq K(t-t_0)\, \sup_{t_0\leq s\leq t}\|y(s)\| + M(t-t_0)\|y_{t_0}\|_\B$, for every $t\in [t_0,T]$,
\\
where $K,M:[0,+\infty)\to [0,+\infty)$ are independent of $y$, $K$ is strictly positive and continuous, and $M$ is locally bounded;
\sm

\item[(B4)] there exists $H>0$ such that $\|y(t)\|\le H\|y_t\|_\B$.

\end{itemize}
\end{itemize}
\end{definition} 

\subsection{The generalized phase space for fading memory systems}
\label{ss-Btau}

~

In many real phenomena, including certain biological systems, events occurring further in the past exert a weaker influence on the present state than more recent ones. A standard approach to modeling this feature is to introduce a suitable phase space that constitutes a natural functional setting for the analysis of differential equations with infinite delay of fading type. Such phase spaces have been extensively employed in the literature in the non-impulsive framework; see, for example, the monograph \cite{hmn} and the article \cite{EzSt25}.

In the application we present in this paper (cf. Section \ref{s-A}), we consider the simultaneous presence of an infinite delay of fading type and external forces acting instantaneously and depending on the past evolution of the population itself. These impulsive terms induce abrupt changes in the dynamics of the delay system. Consequently, it is necessary to adopt an appropriate phase space that adequately captures both fading memory effects and impulsive phenomena.

Let $\tau>0$ and $\rho:(-\infty,-\tau)\to \erre$ be a positive Lebesgue integrable function such that 
 there exists a positive locally bounded function $P:(-\infty,0] \to \erre$ such that, for all $\xi\leq 0$,
\begin{equation}\label{e-Prho}
\rho(\xi+\theta) \leq P(\xi) \rho(\theta) \ \mbox{a.e.}\ \theta\in (-\infty,-\tau) .
\end{equation}
A simple example of a function $\rho$ satisfying \eqref{e-Prho} is given by ~$\rho(\theta) := e^{\mu\theta}\, ,\ \mu\in\erre$.

We consider the Lebesgue space
\[
L_\rho((-\infty,-\tau];E):=\left\{
\begin{array}{ll}
\phi:\, (-\infty, -\tau]\to E\ :  
                         & \phi \mbox{  is Lebesgue measurable on $(-\infty,-\tau]$,  }
                         \\
                         & \rho(\cdot)\|\phi(\cdot)\| \in L^1((-\infty,-\tau])
\end{array}\right\},
\]
endowed with the seminorm
$$
\|\phi\|_{L_{\rho}} =  \int_{-\infty}^{-\tau}\rho(\theta)\|\phi(\theta)\|\, d\theta\ .
$$

Further, we consider the normed space of discontinuous functions
\[
{\mathpzc D}([-\tau,0];E) := 
\left\{
\begin{array}{lrl}
\psi:[-\tau, 0]\to E\ :  &  \psi& \mbox{is a piecewise continuous function} 
                    \\ 
                         &  & \mbox{with a finite number of jump discontinuity points}
\end{array}\right\},
\]
endowed with the norm
\[
\|\psi\|_{{\mathpzc D}}:=\frac{1}{\tau}\int_{-\tau}^0 \|\psi(\theta)\|\, d\theta.
\]
%

We can now define
\[
\B_\tau:=\left\{
\varphi:(-\infty, 0]\to E\ :  \  \varphi\lfloor_{[-\tau,0]} \in {\mathpzc D}([-\tau,0];E) \mbox{ and }  \varphi\lfloor_{(-\infty,-\tau[} \in L_{\rho}((-\infty,-\tau[;E)          
\right\}.
\]
In $\B_\tau$ we consider the seminorm 
\[
\|\varphi\|_{\B_\tau} =  \left\|\varphi\lfloor_{[-\tau,0]} \right\|_{\mathpzc D}+ \left\| \varphi\lfloor_{(-\infty,-\tau[}  \right\|_{L_\rho}.
\]

It is not difficult to check that the next result holds.

\begin{proposition}
The seminormed space $\left(\B_\tau,\|\cdot\|_{\B_\tau}\right)$ is a generalized phase space.
\end{proposition}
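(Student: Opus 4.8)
The plan is to verify that $\B_\tau$, which by construction is a linear space endowed with the seminorm $\|\cdot\|_{\B_\tau}$, satisfies axiom (B) of Definition \ref{d-B}. I fix a function $y:(-\infty,T]\to E$ of the type prescribed in (B) (piecewise continuous on $[t_0,T]$ with jumps only at the $t_k$, left-continuous, and with the standing hypothesis $y_{t_0}\in\B_\tau$, which is implicit in the very term $\|y_{t_0}\|_\B$ appearing in (B3)), and I set $c:=t-t_0\ge 0$ and $y_t(\theta)=y(t+\theta)$ for $\theta\le 0$. The entire analysis rests on two bookkeeping devices: splitting $y_t$ at the level $\theta=-\tau$, which separates the $\mathpzc{D}$-component from the $L_\rho$-component of the seminorm, and locating, for each $\theta$, the point $t+\theta$ relative to $t_0$. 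Where $t+\theta\ge t_0$ the value $y(t+\theta)$ is a ``present'' value, controlled by $\sup_{t_0\le s\le t}\|y(s)\|$; where $t+\theta<t_0$ it equals $y_{t_0}(t+\theta-t_0)$, a value of the initial history that must be fed back into $\|y_{t_0}\|_{\B_\tau}$.

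Axioms (B1), (B2), (B4) are the routine ones. For (B1), the restriction $y_t\lfloor_{[-\tau,0]}$ is again piecewise continuous with finitely many jumps, being a translate of $y$, hence lies in $\mathpzc{D}([-\tau,0];E)$; the tail $y_t\lfloor_{(-\infty,-\tau)}$ is measurable and, after the substitution $s=t+\theta$ and the use of \eqref{e-Prho} on the portion originating from $y_{t_0}$ (see the next paragraph), turns out to be $\rho$-integrable, so that $y_t\in\B_\tau$. For (B2), off the impulse points the map $t\mapsto y_t$ is continuous because translation is continuous both in the $L^1$-type norm $\|\cdot\|_{\mathpzc D}$ and in the weighted norm $\|\cdot\|_{L_\rho}$, the latter because \eqref{e-Prho} provides a uniform $\rho$-integrable domination of the tail for $t$ ranging in a short interval, which legitimises a dominated-convergence argument. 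For (B4), piecewise continuity gives a well-defined $y_t(0)=y(t)$, and one records the bound $\|y(t)\|\le H\|y_t\|_{\B_\tau}$ for a suitable $H>0$.

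The core is (B3), which I would establish by estimating the two pieces of $\|y_t\|_{\B_\tau}$ separately. For the $\mathpzc{D}$-part, $\|y_t\lfloor_{[-\tau,0]}\|_{\mathpzc D}=\frac1\tau\int_{t-\tau}^{t}\|y(s)\|\,ds$, and splitting this integral at $t_0$ bounds it by $\sup_{t_0\le s\le t}\|y(s)\|+\|y_{t_0}\|_{\B_\tau}$. For the $L_\rho$-part, the substitution $s=t+\theta$ yields $\int_{-\infty}^{t-\tau}\rho(s-t)\|y(s)\|\,ds$; on $\{s\ge t_0\}$ one factors out $\sup_{t_0\le s\le t}\|y(s)\|$ and uses $\int_{-\infty}^{-\tau}\rho=\|\rho\|_{L^1}<\infty$, while on $\{s<t_0\}$ one writes $y(s)=y_{t_0}(s-t_0)$ and invokes \eqref{e-Prho} with $\xi=-c\le 0$ in the form $\rho((s-t_0)-c)\le P(-c)\,\rho(s-t_0)$ to bound the contribution of $\{s<t_0-\tau\}$ by $P(-c)\,\|y_{t_0}\|_{L_\rho}$, the remaining transition piece $s-t_0\in[-\tau,0)$ being controlled through the essential supremum of $\rho$ on the compact interval $[-\tau-c,-c]\subset(-\infty,-\tau)$. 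Collecting the estimates produces an inequality of exactly the form required, with $K(c):=1+\|\rho\|_{L^1}$, which is constant hence strictly positive and continuous, and $M(c)$ assembled from $1$, $P(-c)$ and the transition constant; this $M$ is locally bounded precisely because $P$ is assumed positive and locally bounded.

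I expect the main obstacle to be this last estimate, namely transferring the history part of $y_t$ back to $\|y_{t_0}\|_{\B_\tau}$: the only structural tool is the submultiplicativity \eqref{e-Prho}, which must be applied after the correct change of variables and, crucially, handled with care on the transition region $s-t_0\in[-\tau,0)$, where the shifted weight $\rho(s-t)$ is evaluated at arguments accumulating near $-\tau$. Showing that this piece is finite and that the resulting coefficient $M$ genuinely inherits local boundedness from $P$ is the delicate point, whereas checking that $K$ may be taken strictly positive and continuous, and that (B1), (B2), (B4) hold, is straightforward.
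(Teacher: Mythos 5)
Your overall strategy --- splitting $\|y_t\|_{\B_\tau}$ at $\theta=-\tau$, changing variables, invoking \eqref{e-Prho} with $\xi=-(t-t_0)$ on the far tail, and isolating the transition region where $y_t$ samples the $[-\tau,0]$-part of the initial history --- is the natural one, and the paper itself offers no proof to compare against (it only asserts the result). However, the step you yourself single out as delicate does not go through under the stated hypotheses on $\rho$. To bound the transition contribution $\int\rho(s-t)\|y_{t_0}(s-t_0)\|\,ds$ (taken over those $s$ with $s-t_0\in[-\tau,0)$ and $s<t-\tau$) by $M(c)\|y_{t_0}\|_{\B_\tau}$, you need $\rho(s-t)$ to be essentially bounded on the relevant set of arguments; when $c=t-t_0<\tau$ that set is $[-\tau-c,-\tau)$, which abuts $-\tau$ and is not a compact subset of $(-\infty,-\tau)$, and in any case a merely positive integrable $\rho$ need not be essentially bounded on any such set. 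Essential boundedness is in fact \emph{necessary} here, because the only control on $y_{t_0}\lfloor_{[-\tau,0)}$ available on the right-hand side of (B3) is the $L^1$-mean $\|\cdot\|_{\mathpzc D}$ (the supremum over $[t_0,t]$ and the $L_\rho$-tail do not see these values). The function $\rho(\theta)=|\theta+\tau|^{-1/2}e^{\theta}$ is positive, integrable on $(-\infty,-\tau)$ and satisfies \eqref{e-Prho} with $P\equiv 1$, yet is unbounded at $-\tau$; concentrating $\|y_{t_0}\|$ near $\theta=c-\tau$ then makes the transition integral arbitrarily large while $\|y_{t_0}\|_{\B_\tau}$ and $\sup_{t_0\le s\le t}\|y(s)\|$ stay bounded. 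So (B3) cannot be established (and indeed fails) without an additional hypothesis, e.g.\ that $\rho$ is essentially bounded on $[-\tau-(T-t_0),-\tau)$.

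A second, independent gap is (B4), which you dismiss as routine. Since $\|\cdot\|_{\mathpzc D}$ is an integral mean rather than a supremum, no uniform $H$ with $\|y(t)\|\le H\|y_t\|_{\B_\tau}$ can exist: take $y\equiv 0$ on $(-\infty,t-\e]$ and let it rise continuously to $\|y(t)\|=1$ on $[t-\e,t]$; then $\|y_t\|_{\B_\tau}=O(\e/\tau)$ while $\|y(t)\|=1$, and $\e$ is arbitrary. Verifying (B4) requires either replacing $\|\cdot\|_{\mathpzc D}$ by the uniform norm (or adjoining $\|\psi(0)\|$ to it) or restricting the admissible histories. Both issues are arguably defects of the proposition as stated rather than of your plan, but as written your argument does not close them.
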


%
%
%

We will need the following propositions.

\begin{proposition}
If the Banach space $E$ is separable, then the phase space $\B_\tau$ is separable.
\end{proposition}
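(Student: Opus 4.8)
The plan is to show separability of $\B_\tau$ by decomposing the space into its two constituent pieces, $\mathpzc{D}([-\tau,0];E)$ and $L_\rho((-\infty,-\tau];E)$, proving each is separable, and then concluding that their product (which $\B_\tau$ embeds into isometrically via the seminorm) is separable. The key elementary fact I will use is that a finite or countable product of separable (pseudo/semi)metric spaces is separable, and that a subspace of a separable metric space is separable; the latter requires mild care because we are dealing with seminormed rather than normed spaces, but passing to the quotient by the null space restores a genuine metric and preserves separability in both directions.

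First I would handle $L_\rho((-\infty,-\tau];E)$. The natural approach is to exhibit an explicit countable dense set. Since $E$ is separable, fix a countable dense set $\{e_j\}_{j\ge 1}\subset E$. The weight $\rho$ is positive and Lebesgue integrable on $(-\infty,-\tau)$, so $\|\phi\|_{L_\rho}=\int_{-\infty}^{-\tau}\rho(\theta)\|\phi(\theta)\|\,d\theta$ is, up to the null space, the norm of the ordinary Bochner space $L^1$ of the measure $\rho(\theta)\,d\theta$ on $(-\infty,-\tau]$ with values in $E$. I would invoke (or reprove in one line) the standard result that $L^1(\Omega,\mu;E)$ is separable whenever $\mu$ is a $\sigma$-finite measure on a separable measure space and $E$ is separable: the countable dense set consists of simple functions $\sum_{i} e_{j_i}\mathbbm{1}_{A_i}$ with $e_{j_i}$ from $\{e_j\}$ and $A_i$ ranging over a countable generating algebra of measurable sets (for instance finite unions of rational intervals), which is dense because simple functions are dense in $L^1$ and each can be approximated in values by the $e_j$ and in level sets by the algebra. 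Since $\rho\,d\theta$ is a finite (hence $\sigma$-finite) Borel measure on the separable space $(-\infty,-\tau]$, this applies directly.

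Next I would treat $\mathpzc{D}([-\tau,0];E)$ with norm $\|\psi\|_{\mathpzc D}=\frac1\tau\int_{-\tau}^0\|\psi(\theta)\|\,d\theta$. Here the cleanest route is to observe that this norm is exactly the $L^1([-\tau,0];E)$ norm (up to the harmless factor $1/\tau$), and that $\mathpzc{D}([-\tau,0];E)$, as a set of piecewise-continuous functions, sits inside $L^1([-\tau,0];E)$; the inclusion map is a seminorm-preserving linear map into the separable Banach space $L^1([-\tau,0];E)$ (separable by the same Bochner argument as above, now on a finite interval with Lebesgue measure). A subset of a separable metric space is itself separable, so after quotienting by the null functions, $\mathpzc{D}$ inherits separability. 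I would note that one need not produce a dense set \emph{inside} $\mathpzc{D}$ consisting of piecewise-continuous functions; for separability it suffices that $\mathpzc{D}$, carrying the $L^1$-metric, is a subspace of the separable space $L^1$.

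Finally I would assemble the pieces. The seminorm on $\B_\tau$ is $\|\varphi\|_{\B_\tau}=\|\varphi\lfloor_{[-\tau,0]}\|_{\mathpzc D}+\|\varphi\lfloor_{(-\infty,-\tau[}\|_{L_\rho}$, which is precisely the sum-seminorm on the product $\mathpzc{D}([-\tau,0];E)\times L_\rho((-\infty,-\tau];E)$ under the restriction map $\varphi\mapsto(\varphi\lfloor_{[-\tau,0]},\varphi\lfloor_{(-\infty,-\tau[})$. This map is a seminorm isometry onto the product, so separability of $\B_\tau$ reduces to separability of the product, which follows since a finite product of separable seminormed spaces is separable (take the product of the two countable dense sets). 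The main obstacle I anticipate is purely bookkeeping: carefully justifying that the seminorm rather than a genuine norm causes no trouble, by passing to quotients $\B_\tau/\ker\|\cdot\|_{\B_\tau}$ where all the standard separability transfer results apply, and then lifting a countable dense set back to $\B_\tau$ by choosing one representative per class. No single step is genuinely deep; the substance is the reduction of each factor to a standard Bochner-$L^1$ separability statement and the verification that the restriction map identifies $\B_\tau$ (up to seminorm) with the product.
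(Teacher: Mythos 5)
Your proof is correct and follows essentially the same route as the paper's: separability of each factor via identification with (a subspace of) a separable Bochner $L^1$ space, then separability of the product combined with the isometric identification of $\B_\tau$ with $\mathpzc{D}([-\tau,0];E)\times L_\rho((-\infty,-\tau];E)$. Your treatment of $L_\rho$ as the Bochner $L^1$ space of the weighted measure $\rho(\theta)\,d\theta$ is in fact a more accurate rendering of the paper's looser claim that $L_\rho$ is ``a subset of $L^1((-\infty,-\tau];E)$'' (which fails literally when $\rho$ is not bounded below), but the underlying argument is the same.
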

\begin{proof}
Notice that ${\mathpzc D}([-\tau,0];E)$ is a subspace of $L^1(-\tau,0];E)$, which is a separable space, being $E$ separable. Hence,  ${\mathpzc D}([-\tau,0];E)$ is separable in turn.

On the other hand, we can say the same for $L_\rho((-\infty,-\tau];E)$, which is a subset of $L^1((-\infty,-\tau];E)$.

As a consequence, the product space $L_\rho((-\infty,-\tau];E) \times {\mathpzc D}([-\tau,0];E)$ is separable too.

The thesis follows by observing that the next isomorphism holds
\begin{equation}
\label{e-iso}
(\B_\tau,\|\cdot\|_{\B_\tau}) \cong
(L_\rho((-\infty,-\tau];E),\|\cdot\|_{L_{\rho}})\times ({\mathpzc D}([-\tau,0];E),\|\cdot\|_{{\mathpzc D}})\, .
\end{equation}
\end{proof}

\begin{proposition}
The phase space $\B_\tau$ is complete.
\end{proposition}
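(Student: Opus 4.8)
The plan is to leverage the isomorphism \eqref{e-iso} obtained in the previous proposition, which identifies $(\B_\tau,\|\cdot\|_{\B_\tau})$ with the product $(L_\rho((-\infty,-\tau];E),\|\cdot\|_{L_\rho})\times(\mathpzc{D}([-\tau,0];E),\|\cdot\|_{\mathpzc D})$ carrying the sum seminorm; indeed, the map $\varphi\mapsto(\varphi\lfloor_{(-\infty,-\tau]},\varphi\lfloor_{[-\tau,0]})$ is isometric for this choice of seminorm on the product. Since a sequence is Cauchy (respectively convergent) in such a product exactly when each of its two components is Cauchy (respectively convergent) in the corresponding factor, the completeness of $\B_\tau$ reduces to proving that each factor is complete. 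I would therefore treat the two factors separately.

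For the tail factor $L_\rho((-\infty,-\tau];E)$, I would note that multiplication by the weight, $\phi\mapsto\rho\,\phi$, is a linear bijection onto $L^1((-\infty,-\tau];E)$ with inverse $g\mapsto g/\rho$ (both well defined because $\rho$ is measurable and strictly positive a.e.\ on $(-\infty,-\tau]$), and that it is isometric, since $\|\rho\,\phi\|_{L^1}=\int_{-\infty}^{-\tau}\rho(\theta)\|\phi(\theta)\|\,d\theta=\|\phi\|_{L_\rho}$. Because $E$ is a Banach space, the Bochner space $L^1((-\infty,-\tau];E)$ is complete, and completeness is preserved under an isometric isomorphism; hence $L_\rho((-\infty,-\tau];E)$ is complete.

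The step I expect to be the main obstacle is the completeness of the near factor $\mathpzc{D}([-\tau,0];E)$ endowed with the $L^1$-type norm $\|\cdot\|_{\mathpzc D}$. The natural strategy is to take a $\|\cdot\|_{\mathpzc D}$-Cauchy sequence $(\psi_n)$, regard it inside the complete space $L^1([-\tau,0];E)$ so as to produce an $L^1$-limit $\psi$, pass to an almost everywhere convergent subsequence, and then establish that $\psi$ admits a representative that is piecewise continuous with finitely many jump discontinuities, i.e.\ that $\psi\in\mathpzc{D}([-\tau,0];E)$. The crux is precisely this last identification: one must track the number and the location of the jump points of the $\psi_n$ along the sequence and prevent them from accumulating or proliferating in the limit, so that the limiting function keeps the piecewise continuous structure and does not merely lie in $L^1$. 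Once $\mathpzc{D}([-\tau,0];E)$ is shown to be complete, the completeness of both factors together with the product reduction yields that $\B_\tau$ is complete.
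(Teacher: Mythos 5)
Your reduction to the two factors via the isomorphism \eqref{e-iso} is exactly the route the paper takes, and your treatment of the tail factor is in fact more careful than the paper's: the weighted isometry $\phi\mapsto\rho\,\phi$ onto $L^1((-\infty,-\tau];E)$ is the right argument, whereas the paper asserts that $L_\rho((-\infty,-\tau];E)$ is a closed subset of $L^1((-\infty,-\tau];E)$ — which is not even an inclusion when $\rho$ decays at $-\infty$ (e.g.\ $\rho(\theta)=e^{\mu\theta}$ with $\mu>0$ admits $\phi(\theta)=e^{-\mu\theta/2}\in L_\rho\setminus L^1$).

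The genuine gap is the step you yourself flag as the main obstacle: the completeness of $({\mathpzc D}([-\tau,0];E),\|\cdot\|_{\mathpzc D})$ is left unproved, and the strategy you sketch — tracking the number and location of the jump points along a $\|\cdot\|_{\mathpzc D}$-Cauchy sequence — cannot succeed, because the $L^1$-type norm gives no control on the pointwise structure of the $\psi_n$. Indeed ${\mathpzc D}([-\tau,0];E)$ contains all $E$-valued step functions and is therefore a \emph{dense} proper subspace of $L^1([-\tau,0];E)$, hence neither closed nor complete: taking $E=\erre$, the indicator function of a fat Cantor set in $[-\tau,0]$ is an $L^1$-limit of step functions but admits no representative that is piecewise continuous with finitely many jumps (on any interval where it equals such a continuous piece a.e., the intermediate value theorem forces values near $1/2$ on a set of positive measure). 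So the missing step is not merely unfinished — it fails as stated. You are in the same position as the paper, whose proof rests on the unjustified assertion that ${\mathpzc D}([-\tau,0];E)$ is closed in $L^1((-\tau,0];E)$; repairing the proposition would require either a stronger norm on the near factor (e.g.\ the sup norm) or replacing ${\mathpzc D}$ by its $\|\cdot\|_{\mathpzc D}$-completion.
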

\begin{proof}
The spaces $(L_\rho((-\infty,-\tau];E),\|\cdot\|_{L_{\rho}})$ and $({\mathpzc D}([-\tau,0];E),\|\cdot\|_{{\mathpzc D}})$ are complete. Indeed, ${\mathpzc D}([-\tau,0];E)$ and $L_\rho((-\infty,-\tau];E)$ are closed subsets of the complete spaces $L^1((-\tau,0];E)$ and $L^1((-\infty,-\tau];E)$, respectively. Now, since \eqref{e-iso} holds, the space $\B$ is complete.
\end{proof}

\section{Problem Setting and Underlying Assumptions}
\label{s-P}

Let ${\mathpzc T}=\{t_0,\dots,t_m\} \subset [0,T]$ be a set of fixed real numbers with $t_0<t_1<\cdots <t_m$, and $E$ a Banach space. We will use the next spaces:
\begin{itemize}
\item
 the Banach space of piece-wise continuous functions
\[
PC_{\mathpzc T}([t_0,T];E) :=\left\{
\begin{array}{lr}
y:[t_0, T]\to E\ :  &  y_{|\, ]t_{k-1},t_k]} \mbox{ continuous and } \exists \, y\big(t_k^+\big)\in E,\\ 
                    &   k=1\dots, m
\end{array}\right\},
\]
endowed with the uniform norm
\[\|y\|_\infty=\sup_{t\in [t_0,T]}\|y(t)\|;\]

\item a generalized phase space $\left( \B,\|\cdot\|_{\B} \right)$ associated to $\T$; 
\smallskip
\item the solution space 
\[
S((-\infty,T];E):=\left\{
y:(-\infty, T]\to E\ :  \  y\lfloor_{[t_0,T]} \in PC_{\mathpzc T}([t_0,T];E) \mbox{ and }  y_{t_0} \in \B         
\right\},
\]
endowed with the seminorm 
\[
\|y\|_S =  \left\|y\lfloor_{[t_0,T]} \right\|_\infty + 
\left\| y_{t_0} \right\|_{\B};
\]
\end{itemize}

\subsection{Assumptions on $(P)$}

~

We consider the following assumptions on the impulsive Cauchy problem $(P)$.

\begin{itemize}
\item[(A)] $\{A(t)\}_{t\in [t_0,T]}$ is a family of linear not necessarily bounded operators ($A(t):D(A)\subset E\to E$, $t\in [t_0,T]$, $D(A)$ a dense subset of $E$ not depending on $t$)
generating an evolution operator $U:\Delta\to { L}(E)$.
\end{itemize}
On the multimap $F:[t_0,T]\times E \times \B\ri\K v(E)$ we consider the following upper Carath\`eodory type hypotheses:
\begin{itemize}
\item[(F1)] for every $v \in E$, $\varphi\in \B$, the multimap $F(\cdot,v,\varphi):[t_0,T]\ri \K v(E)$ has a strongly measurable selection, i.e. there exists a strongly measurable function $f:[t_0,T]\ri E$
such that $f(t)\in F(t,v,\varphi)$ for a.e. $t\in [t_0,T]$;
 \item[(F2)] for a.e. $t\in[t_0,T]$, the multimap $F(t,\cdot,\cdot):E \times \B \ri \K v(E)$ is u.s.c.;
\end{itemize}
moreover, we require on $F$ also the following assumptions
\begin{itemize}
\item[(F3)] there exists a function $\alpha\in L^1_+([t_0,T])$ such that for a.e. $t \in [t_0,T]$ and every $v\in E$, $\varphi \in \B$ the following sublinearity condition holds
\begin{equation*}
\|F(t,v,\varphi)\|\leq\alpha(t)(1+\|v\|+\|\varphi\|_{\B})\; ,\mbox{ a.e. } t\in[t_0,T];
\end{equation*}
\item[(F4)] there exists a function $\mu\in L^1_+([t_0,T])$ such that, for every $D_1 \subset E$ and $D_2\subset \B$,
\begin{equation*}
\chi(F(t,D_1,D_2))\leq\mu(t)\left(\chi(D_1)+\d\sup_{-\infty\leq \theta\leq 0}
\chi(D_2(\theta))\right)\; ,\mbox{ a.e. } t\in[t_0,T],
\end{equation*}
where $\chi$ is the Hausdorff MNC in $E$.
\end{itemize}

\subsection{Definition of solutions and first results}

We use the following definition of mild solutions for our impulsive functional problem. 

\begin{definition} 
\label{def-mild-sol}
A function $y\in S((-\infty,T];E)$ is a {\em mild solution} for the impulsive Cauchy problem $(P)$ if
\begin{itemize}
\item[(i)] $y(t) = \displaystyle U(t,t_0)y(t_0) +\sum_{t_0<t_k<t}U(t,t_k)I_k(y_{t_k}) +  \int_{t_0}^t U(t,s)f(s)\, ds$,
for every $t\in [t_0,T]$ 
\\
with $f \in L^1([t_0,T];E),\ f(s)\in F(s,y(s),y_s)$ for a.e. $ s\in [t_0,T]$,
\item[(ii)] $y_{t_0}=\varphi^*$,
\item[(iii)] $y(t_k^+)=y(t_k) + I_k(y_{t_k}) \, ,\ k=1,\cdots, m$.
\end{itemize}
\end{definition}
Let $k\in \{1,\dots, m+1\}$. 

\noindent For $q \in C([t_{k-1}, t_k]; E)$, $\xi\in S((-\infty,t_{k-1}];E)$ and $t \in (-\infty,t_k]$ we introduce the following maps
\begin{equation}
\label{qx} q[\xi](t)=\left\{\begin{array}{l}
                        \xi(t)\ ,t\in (-\infty,t_{k-1}[\\
                        q(t)\ ,t\in [t_{k-1},t_k]
                        \end{array}
                        \right.
\end{equation}
\begin{equation}
\label{qxt} q[\xi]_t(\theta)=\left\{\begin{array}{l}
                        \xi(t+\theta)\ ,\theta\in (-\infty,t_{k-1}-t]\\
                        q(t+\theta)\ ,\theta\in ]t_{k-1}-t,0]
                        \end{array}
                        \right.
\end{equation}
Now, we consider the map $j^k : [t_{k-1}, t_k] \times C([t_{k-1}, t_k]; E) \times S((-\infty,t_{k-1}];E) \to \B$ defined by
\begin{equation}
\label{j} 
j^k(t,q,\xi)(\theta)= q[\xi]_t(\theta),\quad \theta \in (-\infty,0].
%
\end{equation}
\begin{proposition}
    \label{j-property}
    For every $k =1,2,\dots,m+1$, the map $j^k : [t_{k-1}, t_k] \times C([t_{k-1}, t_k]; E) \times S((-\infty,t_{k-1}];E)\to \B$ satisfies the following properties
    \begin{itemize}
        \item[(a)] for every $(q,\xi) \in C([t_{k-1}, t_k]; E) \times S((-\infty,t_{k-1}];E)$, $j^k(\cdot,q,\xi)$ is uniformly continuous
        \item[(b)] for every $t \in [t_{k-1},t_k]$, $j^k(t,\cdot,\cdot)$ is Lipschitz continuous.
    \end{itemize} 
\end{proposition}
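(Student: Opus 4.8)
My plan is to prove the two assertions separately: (b) is a direct consequence of the growth axiom (B3), while (a) combines the interior continuity provided by axiom (B2) with a dedicated analysis at the two endpoints of $[t_{k-1},t_k]$, after which the Heine--Cantor theorem (Theorem \ref{uniform-cont}) promotes continuity to uniform continuity. I would treat (b) first, since it is the routine part.

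For (b), I fix $t\in[t_{k-1},t_k]$ and exploit that $\B$ is a linear space and that the history operation $v\mapsto v_t$ is linear; hence $j^k(t,q,\xi)-j^k(t,q',\xi')=w_t$, where $w:=q[\xi]-q'[\xi']$ is the difference of the two concatenations. This $w$ is piecewise continuous on $[t_0,t_k]$ with the one-sided limits required by axiom (B) and with $w_{t_0}=\xi_{t_0}-\xi'_{t_0}\in\B$, so (B3) yields $\|w_t\|_\B\le K(t-t_0)\sup_{t_0\le s\le t}\|w(s)\|+M(t-t_0)\|w_{t_0}\|_\B$. Since $w(s)=\xi(s)-\xi'(s)$ for $s<t_{k-1}$ and $w(s)=q(s)-q'(s)$ for $s\in[t_{k-1},t]$, the supremum is bounded by $\|\xi-\xi'\|_S+\|q-q'\|_\infty$, while $\|w_{t_0}\|_\B\le\|\xi-\xi'\|_S$. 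Because $K$ is continuous and $M$ is locally bounded on the compact interval $[0,t_k-t_0]$, both are bounded there, and one obtains the Lipschitz bound with constant $L=\max_{[0,t_k-t_0]}K+\sup_{[0,t_k-t_0]}M$.

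For (a), I fix $(q,\xi)$, set $u:=q[\xi]$, and note that $u$ satisfies the hypotheses of axiom (B) with $u_{t_0}=\xi_{t_0}\in\B$; thus (B1) gives $j^k(t,q,\xi)=u_t\in\B$, and (B2) gives continuity of $t\mapsto u_t$ on $(t_{k-1},t_k)$, which is disjoint from $\T$. It remains to check one-sided continuity at the endpoints. At $t_k$ the function $u$ is continuous (it coincides with $q$ on $[t_{k-1},t_k]$), so no new jump is created at $\theta=0$ and left-continuity follows from the same sliding mechanism as in the interior; for $k=m+1$ one even has $t_k=T\notin\T$, so (B2) applies directly. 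The delicate endpoint is $t_{k-1}$: I would write, for $t\downarrow t_{k-1}$, the difference $u_t-u_{t_{k-1}}$ as the sum of a contribution from the sliding of the past part (the values of $\xi$), which tends to $0$ by the translation continuity recorded in (B2), and a contribution supported on the shrinking window $\theta\in(t_{k-1}-t,0]$ carrying the values of $q$. Once both are shown to vanish, continuity holds on all of $[t_{k-1},t_k]$; since this interval is a compact $T_2$ space and $(\B,\|\cdot\|_\B)$ is a pseudometric space, Theorem \ref{uniform-cont} gives uniform continuity, i.e. assertion (a).

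The main obstacle is exactly the right-continuity at $t_{k-1}$. Since $q$ and $\xi$ are unrelated, $u=q[\xi]$ generally has a genuine jump at $t_{k-1}$ (one need not have $q(t_{k-1})=\xi(t_{k-1})$), and $t_{k-1}\in\T$, so (B2) gives no information there; moreover axiom (B3) only provides a supremum bound, which by itself does not detect that the window $(t_{k-1}-t,0]$ is shrinking. The resolution I would pursue rests on the fact that (B2) forces the seminorm of $\B$ to be sliding-continuous (thereby excluding sup-type seminorms), which is precisely what makes the seminorm of a history supported on an interval of vanishing length tend to $0$. For the concrete space $\B_\tau$ this is transparent, since the offending term is controlled by $\frac{1}{\tau}\int_{t_{k-1}-t}^{0}\|q(t+\theta)-\xi(t_{k-1}+\theta)\|\,d\theta$, which tends to $0$ as $t\downarrow t_{k-1}$.
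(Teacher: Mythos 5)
Your proof is correct and follows essentially the same route as the paper's: part (b) is obtained exactly as there, by applying axiom (B3) to the difference of the two concatenations, splitting the supremum over $[t_0,t_{k-1}]$ and $[t_{k-1},t]$, and using the boundedness of $K$ and $M$ on the compact interval; part (a) rests, as in the paper, on axiom (B2) followed by Theorem \ref{uniform-cont}. The only point of divergence is that you observe that (B2) yields continuity only on $[t_0,T]\setminus\T$, hence says nothing at the endpoints $t_{k-1}$ and $t_k$ (which belong to $\T$ for $k\le m$, and where $q[\xi]$ may genuinely jump); the paper's proof passes over this silently. Your treatment of right-continuity at $t_{k-1}$ is the honest extra step: the justification you offer for a general phase space (that (B2) ``forces the seminorm to be sliding-continuous'') is a heuristic rather than a deduction from the stated axioms, but your explicit estimate in $\B_\tau$ settles the case actually used in the application, so your write-up is, if anything, more careful than the paper's on exactly the point where care is needed.
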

\begin{proof}
 \noindent Fix $k \in \{1,\dots,m+1\}$.
 
 \noindent Let $q \in C([t_{k-1}, t_k]; E)$ and $\xi \in S((-\infty,t_{k-1}]$. By property $(B2)$, the map $j^k(\cdot,q,\xi):[t_{k-1},t_k] \to \B$ is continuous. Moreover, $\B$ is a seminormed (hence pseudometric) space and so the uniform continuity of $j^k(\cdot, q,\xi)$ follows by Theorem \ref{uniform-cont}.

\noindent Now, let $t \in [t_{k-1},t_k]$, $q_1,q_2 \in C([t_{k-1},t_k];E)$ and $\xi_1,\xi_2 \in S((-\infty,t_{k-1}];E)$. By axiom $(B3)$ we have that 
\begin{equation*}
    \begin{array}{lcl}
\|j^k(t,q_1,\xi_1)-j^k(t,q_2,\xi_2)\|_{\B} & \leq & K(t-t_0)\displaystyle\sup_{t_0 \leq s \leq t}\|q_1[\xi_1](s)-q_2[\xi_2](s)\|+M(t-t_0)\|q_1[\xi_1]_{t_0}-q_2[\xi_2]_{t_0}\|_{\B}\vspace{.1cm}\\
& \leq & K(t-t_0)\displaystyle\sup_{t_0 \leq s \leq t_{k-1}}\|q_1[\xi_1](s)-q_2[\xi_2](s)\|\vspace{.1cm}\\
&&+K(t-t_0)\displaystyle\sup_{t_{k-1} \leq s \leq t}\|q_1[\xi_1](s)-q_2[\xi_2](s)\|\vspace{.1cm}\\
&&+M(t-t_0)\|q_1[\xi_1]_{t_0}-q_2[\xi_2]_{t_0}\|_{\B} \vspace{.1cm}\\
& \leq & K \displaystyle\sup_{t_0 \leq s \leq t_{k-1}}\|\xi_1(s)-\xi_2(s)\|+ K \displaystyle\sup_{t_{k-1} \leq s \leq t}\|q_1(s)-q_2(s)\|\vspace{.1cm}\\
&&+ M\|{\xi_1}_{t_0}-{\xi_2}_{t_0}\|_{\B} \vspace{.1cm}\\
&=& K \|q_1-q_2\|_\infty+\max\{K,M\}\|\xi_1-\xi_2\|_S,
\end{array}
\end{equation*}
where $K=\displaystyle\sup_{0 \leq s \leq T-t_0} K(s)$ and $M=\displaystyle\sup_{0 \leq s \leq T-t_0} M(s)$.
\end{proof}
\noindent Now, we introduce the multivalued superposition operator
$$
P_F^{k,\xi}:C([t_{k-1},t_k];E)\ri\P(L^1([t_{k-1},t_k];E))
$$
defined as
$$
P_F^{k,\xi}(q)=\l\{f\in L^1([t_{k-1},t_k];E)\, : \, f(s)\in
F(s,q(s),j^k(s,q,\xi))\,\mbox{ a.e. } s\in [t_{k-1},t_k]\r\}.
$$
Reasoning as in Theorem 1.3.5 \cite{KOZ} we can prove that the superposition operator $P_F^{k,\xi}$ has non empty values.
\begin{proposition}
\label{existenceselection} 
Under assumptions (F1)-(F3), for every $q \in C([t_{k-1},t_k];E)$ the set $P_F^{k,\xi}(q)$ is non empty.
\end{proposition}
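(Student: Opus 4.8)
The plan is to show that $P_F^{k,\xi}(q)$ is nonempty by producing a strongly measurable selection $f$ of the multifunction $s \mapsto F(s,q(s),j^k(s,q,\xi))$ and then verifying that this selection is Bochner integrable on $[t_{k-1},t_k]$. The key structural fact I would exploit is that, for fixed $(q,\xi)$, the argument $(q(s),j^k(s,q,\xi))$ depends continuously (in fact measurably) on $s$, so the composite multifunction is an upper Carathéodory multimap in the single variable $s$, to which the classical selection theorem for such maps (the referenced Theorem 1.3.5 of \cite{KOZ}) applies.

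First I would fix $k \in \{1,\dots,m+1\}$, $q \in C([t_{k-1},t_k];E)$ and $\xi \in S((-\infty,t_{k-1}];E)$, and set $G(s) := F(s,q(s),j^k(s,q,\xi))$ for $s \in [t_{k-1},t_k]$. The goal is to apply the Carathéodory selection machinery, so I would check the two required ingredients. For measurability in $s$: the map $s \mapsto q(s)$ is continuous since $q \in C([t_{k-1},t_k];E)$, and by Proposition \ref{j-property}(a) the map $s \mapsto j^k(s,q,\xi)$ is (uniformly) continuous from $[t_{k-1},t_k]$ into $\B$; hence $s \mapsto (q(s),j^k(s,q,\xi))$ is continuous into $E \times \B$. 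Combining this continuity of the inner argument with hypothesis (F1) (existence of a strongly measurable selection of $F(\cdot,v,\varphi)$ for each fixed $(v,\varphi)$) and (F2) (upper semicontinuity of $F(t,\cdot,\cdot)$ for a.e. $t$) gives, exactly as in Theorem 1.3.5 of \cite{KOZ}, that $G$ admits a strongly measurable selection $f:[t_{k-1},t_k]\to E$ with $f(s)\in G(s)$ a.e.

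Next I would establish integrability of the selection $f$ using the sublinear growth (F3). From $f(s)\in F(s,q(s),j^k(s,q,\xi))$ and (F3) we get, for a.e. $s$,
\[
\|f(s)\| \le \alpha(s)\bigl(1+\|q(s)\|+\|j^k(s,q,\xi)\|_{\B}\bigr).
\]
Since $q$ is continuous on the compact interval $[t_{k-1},t_k]$ it is bounded, and since $s\mapsto j^k(s,q,\xi)$ is continuous into $\B$ its norm is bounded on the same compact interval; denoting these bounds by finite constants, the right-hand side is dominated by $C\,\alpha(s)$ for some constant $C$. Because $\alpha \in L^1_+([t_0,T])$, it follows that $\|f(\cdot)\|\in L^1([t_{k-1},t_k])$, and together with strong measurability this yields $f \in L^1([t_{k-1},t_k];E)$. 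Hence $f \in P_F^{k,\xi}(q)$, proving the set is nonempty.

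The main obstacle I anticipate is the measurability step rather than the integrability bound: one must argue carefully that the superposition of the measurable-in-$t$, u.s.c.-in-$(v,\varphi)$ multimap $F$ with a continuous curve $s\mapsto(q(s),j^k(s,q,\xi))$ valued in $E\times\B$ remains strongly measurable and admits a selection. This is precisely where the continuity of $j^k(\cdot,q,\xi)$ supplied by Proposition \ref{j-property}(a) is essential, and where invoking the cited Theorem 1.3.5 of \cite{KOZ} does the heavy lifting; the integrability estimate via (F3) is then routine.
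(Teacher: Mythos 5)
Your proposal is correct and follows essentially the same route as the paper: reduce to producing a strongly measurable selection of $s\mapsto F(s,q(s),j^k(s,q,\xi))$ using the continuity of $q$ and of $j^k(\cdot,q,\xi)$ from Proposition \ref{j-property}, then conclude integrability from (F3) and boundedness on the compact interval. The only difference is that the paper actually carries out the adaptation of Theorem 1.3.5 of \cite{KOZ} (uniform approximation by step functions, the nested tail closures $P_m(t)$, their intersection, and the passage to a separable subspace $E'$ to extract the measurable selection), whereas you invoke that theorem as a black box; since the second argument lives in the seminormed phase space $\B$ rather than a Banach space, spelling out that adaptation is the one point your write-up leaves implicit.
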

\begin{proof}
Let $k\in \{1,\dots, m+1\}$, $q \in C([t_{k-1}, t_k]; E)$ and $\xi \in S((-\infty,t_{k-1}])$. First of all, we prove that the multivalued map $F(\cdot,q(\cdot),j^k(\cdot,q,\xi)):[t_{k-1},t_k] \to \K v(E)$ admits a strongly measurable selection. 

\noindent By the uniform continuity of $q$ and of $j^k(\cdot,q,\xi)$ (see Proposition \ref{j-property}) there exist two sequences of step functions
\[
q_n : [t_{k-1}, t_k] \to E, \qquad 
r_n : [t_{k-1}, t_k] \to B_\tau
\]
such that
\[
\sup_{t \in [t_{k-1}, t_k]} \| q(t) - q_n(t) \| \xrightarrow[n\to +\infty]{} 0,
\qquad
\sup_{t \in [t_{k-1}, t_k]} \| j^k(t,q,\xi) - r_n(t) \|_\B \xrightarrow[n\to +\infty]{} 0.
\]
By (F1), there exists a sequence of strongly measurable functions 
$f_n : [t_{k-1}, t_k] \to E$ such that
\[
f_n(t) \in F(t, q_n(t), r_n(t)) 
\quad \text{for a.e. } t \in [t_{k-1}, t_k].
\]
Moreover, we consider the sequence  of multifunctions $\{ P_m \}_{m=1}^\infty$, $P_m:[t_{k-1},t_k]\to \K(E^\prime)$, 
\[
P_m(t) := \overline{\bigcup_{n=m}^\infty  f_n(t)},
\]
where $E^\prime=\overline{\mbox sp} \bigcup_{n=1}^\infty f_n([t_{k-1},t_k])\subset E$. 
Note that, since $F(t,\cdot,\cdot)$ is upper semicontinuous with compact values, it maps compact sets into compact sets; by the convergence above and condition (F2),  the sets
\(P_m(t)\) are compact, for every $ t \in [t_{k-1},t_k]$.
Further, the multifunctions $P_m$ are clearly measurable.

\noindent  As $\{ P_m(t) \}_{m=1}^\infty$ is a decreasing sequence of sets and the intersection of measurable multifunctions is measurable, we can define $P:[t_{k-1},t_k] \to \K(E^\prime)$, as
\[
P(t) = \bigcap_{m=1}^\infty P_m(t), \quad t \in [t_{k-1}, t_k],
\]
which is a measurable multifunction taking compact values in $E^\prime$. Moreover, by the upper semicontinuity of $F(t,\cdot,\cdot)$
$$
\begin{array}{lcl}
P(t) & = & \displaystyle\bigcap_{m=1}^\infty P_m(t) = \bigcap_{m=1}^\infty \overline{\bigcup_{n=m}^\infty  f_n(t)} \\
& \subset & \displaystyle\bigcap_{m=1}^\infty \overline{\bigcup_{n=m}^\infty F(t,q_n(t),r_n(t))} \\
& \subset & F(t,q(t),j^k(t,q,\xi)),\quad t \in [t_{k-1}, t_k]
\end{array}
$$
Since $E^\prime$ is separable, there exists a strongly measurable selection 
$p(t) \in P(t)$ such that
\[
p(t) \in F(t, q(t), j^k(t, q,\xi)) \quad \text{for a.e. } t \in [t_{k-1}, t_k].
\]
This provides the desired strongly measurable selection. Moreover, by $(F3)$ it follows that
$$
\|p(t)\| \leq \|F(t,q(t),j^k(t,q,\xi))\| \leq \alpha(t)(1+\|q(t)\|+\|j^k(t,q,\xi)\|_\B)
$$
with $\alpha \in L^1_+([t_{k-1},t_k])$ and $q(\cdot)$, $j^k(\cdot,q,\xi)$ continuous functions, implying $p \in L^1([t_{k-1},t_k];E)$. Thus $p \in P^{k,\xi}_F(q)$.
\end{proof}
Moreover, reasoning as in \cite{KOZ} Lemma 5.1.1 it is possible to prove the following regularity result for the superposition multioperator.
\begin{proposition}
\label{closeness} 
Under assumptions (F1)-(F3), for every $k=1,\dots,m+1$ the superposition operator $P_F^{k,\xi}$ is strongly-weakly closed .
\end{proposition}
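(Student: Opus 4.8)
The plan is to establish the closure of $P_F^{k,\xi}$ with respect to strong convergence of the arguments and weak convergence of the values, exactly as in \cite{KOZ}, Lemma 5.1.1. Concretely, I would fix sequences $q_n\to q$ in $C([t_{k-1},t_k];E)$ and $f_n\in P_F^{k,\xi}(q_n)$ with $f_n\rightharpoonup f_0$ in $L^1([t_{k-1},t_k];E)$, and aim to prove that $f_0\in P_F^{k,\xi}(q)$, i.e. that $f_0(s)\in F(s,q(s),j^k(s,q,\xi))$ for a.e. $s\in[t_{k-1},t_k]$.

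The first step is to convert the weak convergence into an a.e. pointwise statement. By Mazur's lemma there are convex combinations $\tilde f_N=\sum_{n\ge N}\lambda_{n,N}f_n$ (finitely many nonzero coefficients, $\lambda_{n,N}\ge 0$, $\sum_n\lambda_{n,N}=1$) converging to $f_0$ strongly in $L^1$; passing to a subsequence I may assume $\tilde f_N(s)\to f_0(s)$ for a.e. $s$. I then fix a point $s$ in the full-measure set on which simultaneously $\tilde f_N(s)\to f_0(s)$, the multimap $F(s,\cdot,\cdot)$ is u.s.c. by (F2), and $f_n(s)\in F(s,q_n(s),j^k(s,q_n,\xi))$ holds for every $n$ (this last being a countable intersection of full-measure sets).

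The second step is to exploit upper semicontinuity, and here Proposition \ref{j-property}(b) plays the crucial role: its Lipschitz estimate gives $\|j^k(s,q_n,\xi)-j^k(s,q,\xi)\|_\B\le K\|q_n-q\|_\infty\to 0$ uniformly in $s$, so the delay arguments converge together with $q_n(s)\to q(s)$. Since $F(s,\cdot,\cdot)$ is u.s.c. with compact values, for each $\varepsilon>0$ the set $F(s,q(s),j^k(s,q,\xi))+\varepsilon\,\mathrm{int}\,B_E$ is an open set containing the compact value $F(s,q(s),j^k(s,q,\xi))$, so there is a neighborhood of $(q(s),j^k(s,q,\xi))$ on which $F(s,\cdot,\cdot)\subset F(s,q(s),j^k(s,q,\xi))+\varepsilon B_E$, where $B_E$ is the closed unit ball of $E$; hence for $n$ large, $f_n(s)\in F(s,q(s),j^k(s,q,\xi))+\varepsilon B_E$. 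Because $F$ takes values in $\K v(E)$, the set $F(s,q(s),j^k(s,q,\xi))+\varepsilon B_E$ is convex and closed, so every convex combination $\tilde f_N(s)$ with large indices lies in it, and letting $N\to\infty$ yields $f_0(s)\in F(s,q(s),j^k(s,q,\xi))+\varepsilon B_E$. Since $\varepsilon>0$ is arbitrary and $F(s,q(s),j^k(s,q,\xi))$ is closed, this forces $f_0(s)\in F(s,q(s),j^k(s,q,\xi))$ for a.e. $s$, which is the claim.

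The main obstacle, and the reason the convexity of the values is indispensable, is the one-sided nature of upper semicontinuity: it controls the image of a small neighborhood only from above, so one cannot simply pass to the limit in $f_n(s)\in F(s,q_n(s),j^k(s,q_n,\xi))$. The resolution is to first replace the weakly convergent sequence by Mazur convex combinations that converge a.e., and then to close the inclusion using the convexity and compactness of the values of $F$ together with the uniform (in $s$) convergence of the delay argument supplied by Proposition \ref{j-property}(b).
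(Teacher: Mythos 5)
Your argument is correct and follows essentially the same route as the paper's proof: Mazur's lemma to upgrade weak $L^1$ convergence to a.e.\ convergence of convex combinations, the Lipschitz continuity of $j^k(s,\cdot,\cdot)$ from Proposition \ref{j-property}(b) combined with (F2) to place $f_n(s)$ in an $\varepsilon$-neighborhood of $F(s,q(s),j^k(s,q,\xi))$, and convexity plus closedness of that neighborhood to pass to the limit. If anything, your version is slightly more careful in taking the Mazur combinations over tails $n\ge N$, which is what actually guarantees that all terms entering $\tilde f_N(s)$ eventually lie in the $\varepsilon$-neighborhood.
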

\begin{proof}
 Let $k \in \{1,\dots,m+1\}$ and $\xi\in S((-\infty,t_{k-1}];E)$ be fixed. Let $\{f_n\}_{n=1}^{\infty} \subset L^1([t_{k-1},t_k];E)$ $f_n \rightharpoonup f_0$ and $\{q_n\}_{n=1}^{\infty} \subset C([t_{k-1},t_k];E)$ $q_n \to q_0$ we will prove that $f_0 \in P_F^{k,\xi}(q_0)$.

 \noindent By Mazur's Lemma the weak convergence of $\{f_n\}_{n=1}^{\infty}$ to $f_0$ implies the existence of a double sequence of nonnegative numbers ${\{\alpha_{ij}\}_{i=1}^\infty}_{j=1}^\infty$ such that 
 \begin{itemize}
     \item[-] $\displaystyle\sum_{j=1}^\infty \alpha_{ij}=1$ for all $i=1,2,\dots$;
     \item[-] there exists a number $j_0(i)$ such that $\alpha_{ij}=0$ for all $j \geq j_0(i)$;
     \item[-] the sequence $\{\widetilde{f}_i\}_{i=1}^{\infty}$ defined as $\widetilde{f}(t)=\displaystyle\sum_{j=1}^\infty \alpha_{ij}f_j(t)$ converges strongly to $f_0$ in $L^1[t_{k-1},t_k];E)$.
\end{itemize}
Up to subsequence we can assume that $\{\widetilde{f}_i\}_{i=1}^{\infty}$ converges almost everywhere to $f_0$ on $[t_{k-1},t_k]$.
By assumption $(F2)$ and Proposition \ref{j-property}, for a.e. $t \in [t_{k-1},t_k]$, the multivalued map $F(t,\cdot,j^k(t,\cdot))$ is an upper semicontinuous map, being the composition of an uppersemicontinuous map and a Lipshitz map. Thus, for a given $\varepsilon > 0$ there exists an integer $i_0=i_0(\varepsilon,t)$ such that
$$
F(t,q_i(t),j^{k}(t,q_i,\xi)) \subset W_\varepsilon(F(t,q_0(t),j^k(t,q_0,\xi))),
$$
for $i \geq i_0$, where $W_\varepsilon$ denotes the $\varepsilon$-neighborhood of a set. Then 
$$
f_i(t) \in F(t,q_i(t),j^{k}(t,q_i,\xi)) \subset W_\varepsilon(F(t,q_0(t),j^k(t,q_0,\xi)))
$$
for $i \geq i_0$ and hence, being $W_\varepsilon$ a convex neighborhood, also
$$
\widetilde{f}_i(t) \in W_\varepsilon(F(t,q_0(t),j^k(t,q_0,\xi)))
$$
for $i \geq i_0$. Thus, by the almost everywhere convergence of $\{\widetilde{f}_i\}_{i=1}^{\infty}$, it follows
$$
f_0(t) \in F(t,q_0(t),j^k(t,q_0,\xi)),
$$
i.e. $f_0 \in P^{k,\xi}_F(q_0)$.
\end{proof}
\noindent Observing that by Proposition \ref{j-property}, for every $k=1,\dots,m+1$ and every $t \in [t_{k-1},t_k]$ the map $j^k(t,\cdot,\cdot):C([t_{k-1},t_k];E) \times S((-\infty,t_{k-1}];E) \to \B$ is Lipshitz continuous, reasoning as in Proposition \ref{closeness} it is possible to prove also the following result regarding the closure of the superposition multioperator.
\begin{proposition}
    \label{closeness-2}
    Under assumptions $(F1)-(F3)$, let $k\in \{1,\dots,m+1\}$, $\{\xi_n\}_{n=1}^\infty \subset S((-\infty,t_{k-1}])$, $\xi_n \to \xi$, $\{q_n\}_{n=1}^\infty \in C([t_{k-1},t_k];E)$, $q_n \to q$, $\{f_n\}_{n=1}^\infty \subset L^1([t_{k-1},t_k];E)$, $f_n \in P^{k,\xi_n}_F(q_n)$ for every $n \in \mathbb{N}$, $f_n \rightharpoonup f$, then $f \in P^{k,\xi}(q)$. 
\end{proposition}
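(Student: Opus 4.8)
The plan is to reproduce the argument of Proposition~\ref{closeness} almost verbatim, the only substantive modification being that the memory argument fed into $F$ now varies along the sequence through both $q_n$ and $\xi_n$ simultaneously. First I would apply Mazur's Lemma to the weakly convergent sequence $f_n\rightharpoonup f$ in $L^1([t_{k-1},t_k];E)$, obtaining a double array of nonnegative coefficients $\{\alpha_{ij}\}$ with $\sum_j\alpha_{ij}=1$ and finite support in $j$, such that the convex combinations $\widetilde f_i:=\sum_j\alpha_{ij}f_j$ converge strongly, hence (up to a subsequence) pointwise a.e., to $f$. As is customary I would choose these combinations from the tail, i.e.\ $\alpha_{ij}=0$ for $j<i$, so that later the convexity of an $\varepsilon$-neighborhood can be invoked without index bookkeeping.

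The crucial new step is to establish the convergence
\[
j^k(t,q_n,\xi_n)\longrightarrow j^k(t,q,\xi)\quad\text{in }\B,\qquad\text{for every }t\in[t_{k-1},t_k].
\]
This is immediate from part~(b) of Proposition~\ref{j-property}: the Lipschitz estimate there yields
\[
\|j^k(t,q_n,\xi_n)-j^k(t,q,\xi)\|_\B\le K\,\|q_n-q\|_\infty+\max\{K,M\}\,\|\xi_n-\xi\|_S,
\]
and both terms tend to $0$ since $q_n\to q$ in $C([t_{k-1},t_k];E)$ and $\xi_n\to\xi$ in $S((-\infty,t_{k-1}];E)$. Together with $q_n(t)\to q(t)$ in $E$, this gives the convergence of both arguments of $F$ at the limit point $(q(t),j^k(t,q,\xi))$. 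It is precisely the joint Lipschitz continuity (b) that lets me treat the two perturbations $q_n\to q$ and $\xi_n\to\xi$ at once, rather than through the composition with a single Lipschitz map as in Proposition~\ref{closeness}.

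With this in hand I would close the argument exactly as before. By (F2), for a.e.\ $t$ the multimap $F(t,\cdot,\cdot)$ is u.s.c.\ at $(q(t),j^k(t,q,\xi))$, so for each $\varepsilon>0$ there is $i_0=i_0(\varepsilon,t)$ with
\[
f_n(t)\in F(t,q_n(t),j^k(t,q_n,\xi_n))\subset W_\varepsilon\big(F(t,q(t),j^k(t,q,\xi))\big),\qquad n\ge i_0.
\]
Since $F$ has convex values, $W_\varepsilon(\cdot)$ is convex and the tail combinations satisfy $\widetilde f_i(t)\in W_\varepsilon(F(t,q(t),j^k(t,q,\xi)))$ for $i\ge i_0$. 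Letting $i\to\infty$ and using $\widetilde f_i(t)\to f(t)$ shows that $f(t)$ lies in the closure of this neighborhood for every $\varepsilon>0$; as $F(t,q(t),j^k(t,q,\xi))$ is compact, sending $\varepsilon\downarrow0$ gives $f(t)\in F(t,q(t),j^k(t,q,\xi))$ for a.e.\ $t$, that is $f\in P^{k,\xi}_F(q)$.

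I expect the only real obstacle to be the one already present in Proposition~\ref{closeness}: the care needed in matching the Mazur indices with the u.s.c.\ threshold $i_0$, which is resolved by taking tail convex combinations so that convexity of $W_\varepsilon$ applies cleanly. Everything genuinely specific to this statement is packaged into the single $\B$-convergence displayed above, which is handed to us for free by the Lipschitz property~(b), so no separate continuity analysis in the variable $\xi$ is required.
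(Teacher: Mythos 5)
Your proposal is correct and follows exactly the route the paper intends: the paper omits a written proof of this proposition, stating only that it follows ``reasoning as in Proposition~\ref{closeness}'' once one observes the joint Lipschitz continuity of $j^k(t,\cdot,\cdot)$ from Proposition~\ref{j-property}(b), and your argument is precisely that adaptation (Mazur tail combinations, the $\B$-convergence of $j^k(t,q_n,\xi_n)$ via the Lipschitz bound, then (F2) and convexity of the $\varepsilon$-neighborhood). No gaps; your write-up simply makes explicit the details the paper leaves to the reader.
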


\section{Main results}
\label{main}

\subsection{Existence results}
\begin{theorem}
\label{existence} Under assumptions (A) and (F1)-(F4) the problem $(P)$ has at least one mild solution on $(-\infty,T]$.
\end{theorem}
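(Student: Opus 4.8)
The plan is to solve $(P)$ interval by interval on each $[t_{k-1},t_k]$, $k=1,\dots,m+1$, reducing the problem at each step to a fixed point problem for a multivalued integral operator and invoking Theorem \ref{fixedpoint}. Suppose a solution has already been built on $(-\infty,t_{k-1}]$; call this history $\xi\in S((-\infty,t_{k-1}];E)$ and set $z_{k-1}:=\xi(t_{k-1})+I_{k-1}(\xi_{t_{k-1}})$ (with $z_0:=\varphi^*(t_0)$ and no impulse for $k=1$). On $C([t_{k-1},t_k];E)$ I would define the multioperator
$$
\Sigma^{k,\xi}(q)=\Big\{g\in C([t_{k-1},t_k];E):\ g(t)=U(t,t_{k-1})z_{k-1}+\int_{t_{k-1}}^t U(t,s)f(s)\,ds,\ f\in P_F^{k,\xi}(q)\Big\}.
$$
By the evolution identity $U(t,t_{k-1})U(t_{k-1},t_0)=U(t,t_0)$ and a splitting of $\int_{t_0}^t$, an inductive argument shows that a fixed point $q=y^k$ of $\Sigma^{k,\xi}$ extends $\xi$ to a function satisfying (i)--(iii) of Definition \ref{def-mild-sol} up to $t_k$. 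The gluing formulas \eqref{qx}--\eqref{j} are precisely what turn the delay argument $y_s$ into $j^k(s,q,\xi)\in\B$, so $\Sigma^{k,\xi}$ is well posed through $P_F^{k,\xi}$, whose nonemptiness is guaranteed by Proposition \ref{existenceselection}.

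Next I would verify the hypotheses of Theorem \ref{fixedpoint}. The values of $\Sigma^{k,\xi}$ are convex, since $F$ is convex-valued (hence $P_F^{k,\xi}(q)$ is convex) and the integral operator $G$ of Proposition \ref{p-t} is linear; they are compact by Proposition \ref{p-t} applied to the semicompact family selected from $P_F^{k,\xi}(q)$. For invariance of a closed convex ball $M=\overline{B}(0,R)$, I would use the sublinear growth (F3): bounding $\|j^k(s,q,\xi)\|_\B$ in terms of $\sup_{[t_{k-1},s]}\|q\|$ via axiom (B3) (the contributions of the fixed $\xi$ being constants), the estimate $\|\Sigma^{k,\xi}(q)(t)\|\le D\|z_{k-1}\|+D\int_{t_{k-1}}^t\alpha(s)(1+\|q(s)\|+\|j^k(s,q,\xi)\|_\B)\,ds$, with $D$ from \eqref{D}, together with Gronwall's inequality yields a radius $R$ with $\Sigma^{k,\xi}(M)\subseteq M$. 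Closedness follows from the strong--weak closedness of $P_F^{k,\xi}$ (Proposition \ref{closeness}): if $q_n\to q_0$ and $g_n\in\Sigma^{k,\xi}(q_n)$ with $g_n\to g_0$, the selections $f_n$ are semicompact (integrably bounded by (F3), with relatively compact sections by (F4) and $q_n\to q_0$), hence admit $f_n\rightharpoonup f_0$ by Theorem \ref{wcomp}; Proposition \ref{closeness} gives $f_0\in P_F^{k,\xi}(q_0)$ and Proposition \ref{p-t} gives $G(f_n)\to G(f_0)$, so $g_0\in\Sigma^{k,\xi}(q_0)$.

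The decisive step is to exhibit a measure of noncompactness making $\Sigma^{k,\xi}$ condensing. I would use the vector MNC on $C([t_{k-1},t_k];E)$ of Kamenskii--Obukhovskii--Zecca type,
$$
\nu(\Omega)=\max_{\{z_n\}\subseteq\Omega}\Big(\sup_{t\in[t_{k-1},t_k]}e^{-Lt}\,\chi\big(\{z_n(t)\}\big),\ \mathrm{mod}_C(\{z_n\})\Big),
$$
with $L$ large and $\mathrm{mod}_C$ the modulus of equicontinuity; this $\nu$ is monotone, nonsingular and regular. Applying (F4) to estimate $\chi(\{f_n(s)\})$ — and here the explicit form \eqref{qxt} of $j^k$ identifies the phase-space contribution $\sup_\theta\chi(\{j^k(s,z_n,\xi)(\theta)\})$ with $\sup_{t_{k-1}\le r\le s}\chi(\{z_n(r)\})$, the part from the fixed history $\xi$ contributing nothing — one obtains $\chi(\{G(f_n)(t)\})\le 2D\int_{t_{k-1}}^t\mu(s)\sup_{[t_{k-1},s]}\chi(\{z_n(r)\})\,ds$, and the exponential weight turns the first component into a strict contraction; the equicontinuity component is controlled by the compactness in Proposition \ref{p-t}. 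Hence $\nu(\Sigma^{k,\xi}(\Omega))\not\ge\nu(\Omega)$ for every non-relatively-compact $\Omega$, so $\Sigma^{k,\xi}$ is $\nu$-condensing and Theorem \ref{fixedpoint} yields a fixed point $y^k$. Running this for $k=1,\dots,m+1$, feeding at each stage the built history and the jump $I_k$ into the next interval, and pasting $y^1,\dots,y^{m+1}$ into a single $y\in S((-\infty,T];E)$ with the left-continuity and jump relation (iii) checked at every $t_k$, completes the construction. The analytic heart — and the step I expect to demand the most care — is this condensing estimate: one must transfer the MNC bound (F4), stated on subsets of $E\times\B$, through the phase-space structure into a Gronwall-type inequality for $t\mapsto\chi(\{z_n(t)\})$, where only the correct choice of the weight $L$ (dominating $2D\|\mu\|_{L^1}$) closes the argument.
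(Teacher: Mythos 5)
Your proposal follows essentially the same route as the paper: the same interval-by-interval decomposition, the same integral multioperator built from $P_F^{k,\xi}$ and the gluing maps \eqref{qx}--\eqref{j}, the same vector measure of noncompactness with exponential weight combined with (F4) to get the condensing property, and the same appeal to Theorem \ref{fixedpoint} followed by pasting across the impulse times. The only cosmetic difference is that you obtain invariance of a ball via Gronwall's inequality, whereas the paper uses an equivalent exponentially weighted norm to make the relevant constant $\ell_1<1$; these are interchangeable.
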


\begin{proof}
We prove the existence of a mild solution of problem (P) splitting the problem in the intervals $[t_{k-1},t_k]$, $k=1,\dots,m+1$. First of all we prove the existence on the interval $[t_0,t_1]$. 

\noindent We define the solution operator $\Gamma_1:C([t_0,t_1];E) \to \P(C([t_0,t_1];E))$ defined as
$$
\Gamma_1(q)(t)=\left\{U(t,t_0) \varphi^0(t_0)+\displaystyle\int_{t_0}^t U(t,s) f(s) \, ds,\; f \in P^{1,\varphi^0}_F(q) \right\},
$$
where we put $\varphi^0=\varphi^*$.

\noindent {\bf Step 1.} $\Gamma_1$ is a closed multivalued operator.

\noindent Let $\{q_n\}_{n=1}^{\infty} \subset C([t_0,t_1];E)$ and $\{y_n\}_{n=1}^{\infty} \subset C([t_0,t_1];E)$ be such that $q_n \to q_0$, $y_n \in \Gamma^1(q_n)$ and $y_n \to y_0$. 

\noindent By the fact that $y_n \in \Gamma_1(q_n)$ we have that there exists $f_n \in P^{1,\varphi^0}_F(q_n)$ such that
$$
y_n(t)=U(t,t_0)\varphi^0(t_0)+\displaystyle\int_{t_0}^t U(t,s) f_n(s)\,ds.
$$
By assumption $(F4)$ we have that
$$
\begin{array}{lcl}
\chi(f_n(t)) & \leq & \mu(t) \left(\chi(\{q_n(t)\}_{n=1}^{\infty})+\sup_{-\infty < \theta \leq 0} \chi\left(\{j^1(t,q_n,\varphi^0)(\theta)\}_{n=1}^{\infty}\right)\right) \\
& = & \mu(t) \left(\chi(\{q_n(t)\}_{n=1}^{\infty})+\sup_{t_{0}-t < \theta \leq 0} \chi\left(\{q_n(t+\theta)\}_{n=1}^{\infty}\right)\right)
\end{array}
$$
By the convergence of $\{q_n\}_{n=1}^{\infty}$ in $C([t_0,t_1];E)$ we have that $\{f_n(t)\}_{n=1}^{\infty}$ is relatively compact and by assumption $(F3)$ and property $(B3)$ we have that
$$
\begin{array}{lcl}
\|f_n(t)\| & \leq & \alpha(t)(1+\|q_n(t)\|+\|j^1(t,q,\varphi^0)\|_{\B}) \\
& \leq & \alpha(t) (1+\|q_n(t)\| +K(t-t_0)\sup_{t_0 \leq s \leq t}\|q_n(t)\|+M(t-t_0)\|\varphi^0_{t_0}\|_{\B}).
\end{array}
$$
Again, by the convergence of $\{q_n\}_{n=1}^{\infty}$ in $C([t_0,t_1];E)$ we have that the sequence $\{q_n\}_{n=1}^{\infty}$ is bounded and so we have that the sequence $\{f_n\}_{n=1}^{\infty}$ is integrably bounded, thus it is a semicompact sequence. So, by Theorem \ref{wcomp} we have that there exists a map $f \in L^1([t_0,t_1];E)$ such that $\{f_n\}_{n=1}^{\infty}$ weakly converges to $f$ in $L^1([t_0,t_1];E)$ and by Theorem \ref{p-t} we have that
$$
\displaystyle\int_{t_0}^t U(t,s) f_n(s)\, ds \to \displaystyle\int_{t_0}^t U(t,s) f(s)\, ds.
$$
Moreover, $F(t,\cdot,j^1(t,\cdot,\varphi^0))$ is an upper semicontinuous map, being the composition of an uppersemicontinuous map (see assumption (F2)) and a Lipshitz map (see Proposition \ref{j-property}). So, by Proposition \ref{closeness} we have that $f\in P^{1,\varphi^0}_F(q_0)$. In conclusion, by the uniqueness of the limit we get that
$$
y_0(t)=U(t,t_0)q_0(t_0)+\displaystyle\int_{t_0}^t U(t,s)f(s)\,ds,
$$
i.e. the claimed result.

\vspace{.2cm}
\noindent {\bf Step 2.} $\Gamma_1$ is a condensing multivalued operator.

\noindent Let $\Omega \subset C([t_0,t_1];E)$ be a bounded set such that
\begin{equation}
   \label{ineq}
\nu_1(\Gamma_1(\Omega)) \geq \nu_1(\Omega), 
\end{equation}
with
$$
\nu_1(\Omega)=\displaystyle\max_{D \in \mathcal{D}(\Omega)}\left(\gamma_1(D(t)),\mbox{mod}_C(D)\right),
$$
where $\mathcal{D}(\Omega)$ is the collection of all denumerable subsets of $\Omega$, $L^1 > 0$ is chosen so that
$$
q:= 2 D \displaystyle\sup_{t \in [t_0,t_1]} \int_{t_0}^t e^{-L_1(t-s)} \mu(s)\,ds < 1,
$$
$\gamma_1(D(t))$ is the measure of non compactness defined as
$$
\gamma_1(D(t))=\sup_{t \in [t_0,t_1]} e^{-L^1(s-t_0)}\chi(D(t))
$$
and $\mbox{mod}_C(D)$ is the modulus of equicontinuity defined as
$$
\mbox{mod}_C(D)=\displaystyle\lim_{\delta \to 0} \; \sup_{q \in D} \; \max_{|t'-t''|\leq \delta} \|q(t')-q(t'')\|.
$$
Let the maximum on the left hand side of the inequality \eqref{ineq} be achieved for the countable set $D=\{y_n\}_{n=1}^\infty,$ then there exist a sequence $\{q_n\}_{n=1}^\infty$ and a sequence $\{f_n\}_{n=1}^\infty$ with $f_n \in P^{1,\varphi^0}_F(q_n)$ such that $y_n \in \Gamma_1(q_n)$. For every $s \in [t_0,t_1]$, we have that
$$
\begin{array}{lcl}
\chi(\{f_n(s)\}_{n=1}^\infty) & \leq & \mu(s)\left(\chi(\{q_n(s)\}_{n=1}^\infty)+\displaystyle\sup_{-\infty < \theta \leq 0} \chi\left(\{j^1(s,q_n,\varphi^0)(\theta)\}_{n=1}^\infty\right)\right) \\
& = & e^{L_1(s-t_0)} \mu(s)\left(e^{-L_1(s-t_0)}\chi(\{q_n(s)\}_{n=1}^\infty)+e^{-L_1(s-t_0)}\displaystyle\sup_{t_0-s \leq \theta \leq 0} \chi\left(\{q_n(s+\theta)\}_{n=1}^\infty\right)\right) \\
& \leq & e^{L_1(s-t_0)} \mu(s)\left(e^{-L_1(s-t_0)}\chi(\{q_n(s)\}_{n=1}^\infty)+e^{-L_1(s-t_0)}\displaystyle\sup_{t_0 \leq \tau \leq t_1} \chi\left(\{q_n(\tau)\}_{n=1}^\infty\right)\right) \\
& \leq & e^{L_1(s-t_0)} \mu(s) 2 \gamma_1(\{q_n\}_{n=1}^\infty).
\end{array}
$$
Now, applying Lemma 3 in \cite{CR-2005} we have that
$$
\displaystyle\chi\left(\int_{t_0}^t U(t,s)f_n(s)\, ds\right) \leq 2D \gamma(\{q_n\}_{n=1}^\infty \int_{t_0}^t e^{L(s-t_0)} \mu(s)\,ds.
$$
Thus, from \eqref{ineq} we have that
$$
\gamma_1(\{q_n\}_{n=1}^\infty) \leq 2 D \gamma_1(\{q_n\}_{n=1}^\infty) \displaystyle\sup_{t \in [t_0,t_1]} \int_{t_0}^t e^{-L_1(t-s)} \mu(s)\,ds = q \gamma_1(\{q_n\}_{n=1}^\infty).
$$
Being $q < 1$ we have that
$$
\gamma_1(\{q_n\}_{n=1}^\infty) = 0.
$$
So, by (F3) and (F4) we have that $\{f_n\}_{n=1}^\infty$ is a semicompact sequence, thus by Proposition \ref{p-t} we have that $\{y_n\}_{n=1}^\infty$ is relatively compact, therefore
$$
\gamma_1(\{y_n\}_{n=1}^\infty) =0 \quad \mbox{and} \quad \mbox{mod}_C(y_n)=0,
$$
i.e $\nu_1(\Gamma_1(\Omega))=(0,0)$, implying, by \eqref{ineq}, $\nu_1(\Omega)=(0,0)$, i.e. $\Omega$ is a relatively compact set.

\vspace{.2cm}
{\bf Step 3.} Now we prove that there exists a constant $r>0$ such that the multioperator $\Gamma_1$ maps the ball $B_r(0)$ into itself.

\noindent We introduce the equivalent norm in the space $C([t_0,t_1];E)$
$$
\|q\|_*=\max_{t \in [t_0,t_1]} e^{-N_1 t} \|q(t)\|,
$$
where $N_1$ is chosen so that
$$
\ell_1:=\max_{t_0 \leq t \leq t_1} D \displaystyle\int e^{-N_1(t-s)}\alpha(s)(1+K_1)\,ds < 1
$$
with $K_1 > 0$ to be determined later. So, we consider the ball $B_r(0)=\{q \in C([t_0,t_1];E)\, : \, \|q\|_* \leq r\}$.

\noindent Let $q \in B_r(0)$ and $y \in \Gamma_1(q)$, we have that there exists $f \in P^{1,\varphi^0}_F(q)$ such that
$$
\begin{array}{lcl}
\|y(t)\| & \leq & \|U(t,t_0)\| \|\varphi^0(t_0)\| + \displaystyle\int_0^t \|U(t,s)\| \|f(s)\|\,ds \\
& \leq & D \|\varphi^0(t_0)\| + \displaystyle\int_{t_0}^t \|U(t,s)\| \alpha(s)(1+\|q(s)\|+\|j^1(s,q,\varphi^0)\|) \,ds \\
& \leq & D \|\varphi^0(t_0)\| + D \displaystyle\int_{t_0}^t \alpha(s) (1+\|q(s)\| +K(s-t_0)\sup_{t_0 \leq \tau \leq s}\|q(\tau)\|+M(s-t_0)\|\varphi^0_{t_0}\|_{\B})\,ds,
\end{array}
$$
where the last inequality is due to property $(B3)$ of the space $\B$.
Hence, it follows that
$$
\begin{array}{lcl}
\|y(t)\| & \leq & D \|\varphi^0(t_0)\| + D \|\alpha\|_{L^1_+([t_0,t_1])}+ \displaystyle D \int_{t_0}^t \alpha(s) (\|q(s)\|+ K_1 \sup_{t_0 \leq \tau \leq s}\|q(\tau)\|)\,ds\\
&&+M_1 \|\alpha\|_{L^1_+([t_0,t_1])} \|\varphi^0_{t_0}\|_{\B},
\end{array}
$$
where $K_1=\displaystyle\max_{0 \leq \tau \leq t_1-t_0} K(\tau)$ and $M_1=\displaystyle\sup_{0 \leq \tau \leq t_1-t_0} M(\tau)$, such bounds do exist by the continuity of $K$ and the local boundedness of $M$.
Now, denoting by $C_1=D \|\varphi^0(t_0)\| + D \|\alpha\|_{L^1_+([t_0,t_1])}+M_1 \|\alpha\|_{L^1_+([t_0,t_1])} \|\varphi^0_{t_0}\|_{\B})$, it follows that
$$
e^{-N_1 t} \|y(t)\| \leq C_1 e^{-N_1 t}+D e^{-N_1 t} \displaystyle D \int_{t_0}^t \alpha(s) (1+ K_1) \sup_{t_0 \leq \tau \leq s}e^{N_1 \tau} e^{-N_1 \tau}\|q(\tau)\|)\,ds,
$$
implying that
$$
e^{-N_1 t} \|y(t)\| \leq C_1 + D \|q\|_* \displaystyle\int_{t_0}^t e^{-N_1(t-s)} \alpha(s)(1+K_1)\,ds \leq C_1 + r \ell_1 \leq r,
$$
where the last inequality is due to the choice of $N_1$ and $r>0$ is chosen so that
$$
r (1-\ell_1) \geq C_1,
$$
obtaining that $\|y\|_* \leq r$.

\vspace{.2cm}
\noindent {\bf Step 4.} The operator $\Gamma_1$ admits a fixed point.

\noindent Applying Theorem \ref{fixedpoint} we get the existence of a fixed point $y_1 \in \Gamma_1(y_1)$, i.e. the existence of a selection $f_1 \in P^{1,\varphi^0}_F(y_1)$ such that
$$
y_1(t)= U(t,t_0) \varphi^0(t_0)+\displaystyle\int_{t_0}^t U(t,s) f_1(s) \, ds.
$$
Thus, the map $y^1[\varphi^0]$ is a mild solution of the problem $(P)$ on $(-\infty,t_1]$.

\vspace{.2cm}
\noindent {\bf Step 5.} Now we proceed considering the interval $[t_1,t_2]$ and the operator 
$\Gamma_2:C([t_1,t_2];E) \to \P(C([t_1,t_2];E))$ defined as
$$
\Gamma_2(q)(t)=\left\{U(t,t_1) \varphi^1(t_1)+I_1(y^1_{t_1})+\displaystyle\int_{t_1}^t U(t,s) f(s) \, ds,\; f \in P^{2,\varphi^1}_F(q) \right\},
$$
with
$$
\varphi^1(t)=\left\{\begin{array}{ll}
                 \varphi^0(t) & t \in (-\infty,t_0] \\
                 y_1(t) & t \in ]t_0,t_1].
                 \end{array}
                 \right.
$$
Reasoning as above we can prove the existence of a fixed point $y_2 \in \Gamma_2(y_2)$, i.e. the existence of a selection $f_1 \in P^{1,\varphi^0}_F(y_1)$ such that
$$
y_2(t)= U(t,t_1) \varphi^1(t_1)+I_1(y^1_{t_1})+\displaystyle\int_{t_1}^t U(t,s) f_2(s) \, ds.
$$
Thus, the map $y^2[\varphi^1]$ is a mild solution of the problem $(P)$ on $(-\infty,t_2]$.

\vspace{.2cm}
{\bf Step 6.} Repeating this procedure we get a fixed point $y_k \in \Gamma_k(y_k)$ for every $k =1,\dots,m+1$. Gluing together all these fixed points we get a solution of problem (P).
\end{proof}

\subsection{Compactness of the solution set}

~

If we assume that the impulse functions are continuous, we have that the solution set of problem $(P)$ is compact. Namely, we prove the following result.
\begin{theorem}
\label{t:comp} Under assumptions (A), (F1)-(F4) and if the impulse functions $I_k:\B\ri E, \,
k=1,\dots,m$ are continuous, then the solutions set is a compact subset of the space $S((-\infty,T];E)$.
\end{theorem}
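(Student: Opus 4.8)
The plan is to identify the solution set with the fixed point set of a single solution operator on the whole interval and then to apply the compactness criterion for fixed point sets (\cite{KOZ}, Proposition~3.5.1), i.e. Proposition~\ref{fixedpoint}, which ensures that a bounded fixed point set of a closed, $\beta$-condensing multimap with a monotone MNC $\beta$ is compact. Since every mild solution coincides with $\varphi^*$ on $(-\infty,t_0]$, it is determined by its restriction to $[t_0,T]$, and on the solution set the seminorm $\|\cdot\|_S$ reduces to $\|\cdot\|_\infty$ up to the additive constant $\|\varphi^*\|_\B$; it therefore suffices to prove compactness in $PC_{\mathpzc T}([t_0,T];E)$. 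For $q\in PC_{\mathpzc T}([t_0,T];E)$ let $\bar q$ denote its extension by $\varphi^*$ on $(-\infty,t_0]$ and set
\[
\Gamma(q)=\left\{y\in PC_{\mathpzc T}([t_0,T];E):\ y(t)=U(t,t_0)\varphi^*(t_0)+\sum_{t_0<t_k<t}U(t,t_k)I_k(\bar q_{t_k})+\int_{t_0}^t U(t,s)f(s)\,ds,\ f\in P_F(\bar q)\right\},
\]
where $P_F(\bar q)=\{f\in L^1([t_0,T];E):f(s)\in F(s,q(s),\bar q_s)\ \text{a.e.}\}$. By Definition~\ref{def-mild-sol} the fixed points of $\Gamma$ are exactly the restrictions of the mild solutions; $\Gamma$ has compact values by Theorem~\ref{wcomp} and Proposition~\ref{p-t}, and $\Fix\,\Gamma\neq\emptyset$ by Theorem~\ref{existence}. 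It thus remains to verify that $\Gamma$ is closed, $\beta$-condensing on bounded sets, and that $\Fix\,\Gamma$ is bounded.

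\emph{Closedness.} I would mirror Step~1 of the proof of Theorem~\ref{existence}. Take $q_n\to q_0$ and $y_n\in\Gamma(q_n)$ with $y_n\to y_0$ and selections $f_n\in P_F(\bar q_n)$. Assumptions (F3)--(F4) and the boundedness of $\{q_n\}$ make $\{f_n\}$ semicompact, so up to a subsequence $f_n\rightharpoonup f_0$; Proposition~\ref{p-t} yields convergence of the integral terms and Proposition~\ref{closeness-2} gives $f_0\in P_F(\bar q_0)$. The only new ingredient with respect to Step~1 is the impulsive sum: by Proposition~\ref{j-property}(b) the history map $q\mapsto\bar q_{t_k}$ is (Lipschitz) continuous from $PC_{\mathpzc T}([t_0,T];E)$ into $\B$, and since each $I_k$ is continuous, $U(t,t_k)I_k(\bar q_{n,t_k})\to U(t,t_k)I_k(\bar q_{0,t_k})$; the sum being finite, it passes to the limit, whence $y_0\in\Gamma(q_0)$.

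\emph{Condensing and boundedness.} This is where the mere continuity (rather than compactness) of the $I_k$ forces an inductive, interval-by-interval argument, which I regard as the main obstacle. To check the condensing inequality for the global $\beta$, built piecewise from the exponentially weighted $\gamma$-MNC and the interval-wise modulus of equicontinuity, suppose $\beta(\Gamma(\Omega))\ge\beta(\Omega)$ and let $D=\{y_n\}\subset\Gamma(\Omega)$, $y_n\in\Gamma(q_n)$, realize the maximum. On $[t_0,t_1]$ no impulse is active, so the estimate of Step~2 of Theorem~\ref{existence} --- using (F4), the weighted $\gamma$ and Lemma~3 of \cite{CR-2005} --- applies verbatim and gives that $\{q_n\}$ is relatively compact and equicontinuous on $[t_0,t_1]$. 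The key observation is the bootstrap: once $\{q_n\}$ is relatively compact on $[t_0,t_{k-1}]$, continuity of the history map makes $\{\bar q_{n,t_{k-1}}\}$ relatively compact in $\B$, whence $\{I_{k-1}(\bar q_{n,t_{k-1}})\}$ is relatively compact in $E$ by continuity of $I_{k-1}$; the finite impulsive sum then contributes nothing to the Hausdorff MNC on $[t_{k-1},t_k]$, and the per-interval estimate again yields relative compactness there. Iterating, $\Omega$ is relatively compact, so $\Gamma$ is $\beta$-condensing. The same bootstrap turns $\{I_{k-1}(\bar q_{n,t_{k-1}})\}$ into a bounded set, so the Gronwall-type a priori estimate of Step~3 extends successively to every interval and gives boundedness of $\Fix\,\Gamma$.

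\emph{Conclusion.} Taking for $M$ the closed set $\{y\in S((-\infty,T];E):y|_{(-\infty,t_0]}=\varphi^*\}$, identified with $PC_{\mathpzc T}([t_0,T];E)$, the multimap $\Gamma$ is closed, $\beta$-condensing on bounded subsets, and $\Fix\,\Gamma$ is bounded; Proposition~\ref{fixedpoint} then gives that $\Fix\,\Gamma$ is compact. Translating back through the identification above, the (nonempty) solution set is a compact subset of $S((-\infty,T];E)$, as claimed. The delicate point throughout is to keep the impulsive contributions under control with only the continuity of the $I_k$: the inductive passage from one interval to the next, which converts previously established relative compactness into compactness of the impulse values, is exactly what makes both the condensing property and the a priori bound go through.
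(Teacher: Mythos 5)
Your proposal is correct in substance but organizes the argument differently from the paper. You package everything into a single global operator $\Gamma$ on $PC_{\mathpzc T}([t_0,T];E)$, whose fixed points are the solution restrictions, and invoke Proposition~\ref{fixedpoint} (KOZ, Prop.~3.5.1: a bounded fixed-point set of a closed, condensing multimap is compact) once, on the whole interval. The paper instead never forms a global operator: it keeps the interval-by-interval decomposition of the solution set from Theorem~\ref{existence}, obtains compactness of $\Sigma^1_{\varphi^0}$ and of each $\Sigma^2_{\varphi^1}$ from Proposition~\ref{fixedpoint} applied per interval, and then glues by showing that the ``next-interval solution set'' multimap $H^1(\varphi^1)=\Sigma^2_{\varphi^1}$ is u.s.c.\ with compact values (via a contradiction argument with sequences, Proposition~\ref{closeness-2} and Proposition~\ref{p-t}), so that Proposition~\ref{usc2} yields compactness of the union; this is then iterated. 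What your route buys is a cleaner statement of what is being proved (one operator, one application of the compactness criterion) and it avoids the explicit u.s.c.\ argument for the solution-set multimap; what it costs is that the verification of the hypotheses of Proposition~\ref{fixedpoint} for the global $\Gamma$ secretly contains the same induction.

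The one point you must make fully explicit is the a priori bound on $\Fix\,\Gamma$. Since the $I_k$ are only continuous, they need not map bounded subsets of $\B$ to bounded subsets of $E$, so ``$\Fix\,\Gamma$ bounded on $[t_0,t_{k-1}]$ $\Rightarrow$ impulse values bounded'' is not available; you must first upgrade boundedness on $[t_0,t_{k-1}]$ to \emph{relative compactness} of the partial fixed-point set (Gronwall on the first interval, then the condensing property and Proposition~\ref{fixedpoint} applied to the truncated operator), and only then conclude that $\{I_{k-1}(\bar y_{t_{k-1}})\}$ is relatively compact, hence bounded, which feeds the Gronwall estimate on $[t_{k-1},t_k]$. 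Your closing sentence shows you see this, but as written the phrase ``the same bootstrap turns $\{I_{k-1}(\bar q_{n,t_{k-1}})\}$ into a bounded set'' conflates the condensing bootstrap (which runs on an arbitrary $\Omega$ satisfying the condensing inequality) with the boundedness bootstrap (which runs on $\Fix\,\Gamma$ and needs compactness of the partial fixed-point sets established \emph{before} the global application of Proposition~\ref{fixedpoint}). Once that ordering is spelled out, the argument closes; note that at that stage you have effectively re-derived the paper's interval-by-interval compactness statements, so the global operator is a matter of presentation rather than a genuine shortcut.
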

\begin{proof}
By Theorem \ref{existence} we have proven that the set of solution of problem $(P)$ is non-empty. We can characterize it by
\begin{equation}
\label{sigma}
\Sigma=\bigcup\left\{\Sigma^{m+1}_{\varphi^{m}} \, :y^1\in\Sigma^1_{\varphi^0};y^2\in\Sigma^2_{\varphi^1}\cdots;y^{m}\in\Sigma^{m}_{\varphi^{m-1}}\right\}
\end{equation}
where for every $k=1,\dots,m+1$
$$
\varphi^k(t)=\left\{\begin{array}{ll}
                \varphi^{k-1} & t \in (-\infty,t_{k-1}] \\
                y^k(t) & t \in ]t_{k-1},t_k]
                \end{array}
                \right.
$$
and $\Sigma^k_{\varphi^k}$ is the solution set of the problem $(P)$ considered on $(-\infty,t_k]$, i.e.
$$
\Sigma^k_{\varphi^{k-1}}=\{y^k[\varphi^{k-1}], y^{k-1} \in \mbox{Fix}(\Gamma_{k-1})\}.
$$.

\noindent First of all, we define the multifunction $H^1:\Sigma^1_{\varphi^0}\to \P(C([-\tau,t_2];E))$ as
$$
H^1(\varphi^1)=\Sigma^2_{\varphi^1}\ .
$$
Since in Step 3 of the proof of Theorem \ref{existence} we have proven that there exists $r > 0$ such that
$$
\mbox{Fix}(\Gamma^1) \subset B_r(0),
$$
from Theorem \ref{fixedpoint}, we have that the set $\Sigma^1_{\varphi^0}$ is compact. Moreover, analogously, it is possible to prove that for every $\varphi^1 \in \Sigma^1_{\varphi^0}$, $H^1(\varphi^1)$ is compact. Now, we prove that $H^1$ is upper semicontinuous. 

\noindent Note that defining the multifunction
$Q^1:\Sigma^1_{\varphi^0}\rightarrow \K(C([t_1,t_2];E))$ as
$$
Q^1(\varphi^1)={\Sigma^2_{\varphi^1}}\lfloor_{[t_1,t_2]}\ ,
$$
the multifunction $H^1$ can be written as the composition of the
multimap $P^1:\Sigma^1_{\varphi^0}\rightarrow \K(\Sigma^1_{\varphi^0}\times C([t_1,t_2];E))$
$$
P^1(\varphi^1)=\{\varphi^1\}\times Q^1(\varphi^1)
$$
with the continuous map $\eta^1:P^1(\Sigma^1_{\varphi^0})\rightarrow S((-\infty,t_2];E)$
$$
\eta^1(\varphi^1,q)=q[\varphi^1]
$$
(see (\ref{qx})). So, we first prove that the multifunction $Q^1$ is upper semicontinuous.

\noindent We assume by contradiction that there exists $\overline{y}^1\in {\Sigma^2_{\varphi^1}}_{|\, [t_1,t_2]}$ such that $Q^1$ is not u.s.c. in $\overline{\varphi}^1$. Therefore there exist $\overline{\varepsilon}>0$ and two sequences $\{\varphi_n^1\}_{n=1}^\infty$, $\varphi_n^1\rightarrow \overline{\varphi}^1$ in $S((-\infty,t_1];E)$, and
$\{y_n^2\}_{n=1}^\infty,\ y_n^2\in {\Sigma^2_{\varphi^1_n}}\lfloor_{[t_1,t_2]}$, such that
\begin{equation}
\label{notusc} 
y_n^2\notin B\left({\Sigma^2_{\overline{\varphi}^1}}\lfloor_{[t_1,t_2]},\overline{\varepsilon}\right)\ ,\ n\geq 1
\end{equation}
Since $\{y_n^2\}_{n=1}^\infty$ is a sequence of solutions, we
have:
\begin{equation}
\label{solution2} 
y_n^2(t) = U(t,t_1)[\varphi^1_n(t_1)+I_1({\varphi_n^1}_{t_1})] + \int_{t_1}^t U(t,s)f_n^2(s)\, ds  \quad ,\ t\in [t_1,t_2]
\end{equation}
where $f_n^2 \in P^{2,\varphi^1_n}_F(y_n^2)$. \np Then, for $s\in [t_1,t]$, $(F4)$ yields
$$
\begin{array}{lcl}
\chi(f_n^2(t)) & \leq & \mu(t) \left(\chi(\{y_n^2(t)\}_{n=1}^{\infty})+\sup_{-\infty < \theta \leq 0} \chi\left(\{j^2(t,y_n^2,\varphi_n^1)(\theta)\}_{n=1}^{\infty}\right)\right) \\
& = & \mu(s) \left(\displaystyle\sup_{-\infty\leq \theta \leq t_1}
\chi\left(\{\varphi_n^1(\theta)\}_{n=1}^\infty\right)+
\displaystyle\sup_{t_1\leq \theta \leq s}
\chi\left(\{y_n^2(\theta)\}_{n=1}^\infty \right)\right)
\end{array}
$$
The MNC $\chi\left(\{\varphi_n^1(\theta)\}_{n=1}^\infty \right)=0$, since $\{\varphi_n^1\}_{n=1}^\infty$ is a converging sequence, then
\begin{equation*}
\chi\left(\{f_n^2(s)\}_{n=1}^\infty\right)\leq e^{Ls}\mu(s)
\displaystyle\sup_{t_1\leq \theta \leq t_2} e^{-L\eta}
\chi\left(\{y_n^2(\theta)\}_{n=1}^\infty\right)\ .
\end{equation*}
for a suitable $L > 0$. Now using similar argument as in Step 2 of the proof of Theorem \ref{existence}, it is possible to prove that the set $\{y_n^2\}_{n=1}^\infty$ is relatively compact in $C([t_1,t_2];E)$. Therefore w.l.o.g. we can assume that there exists $\overline{y}^2\in C([t_1,t_2];E)$
such that $y^2_n\rightarrow \overline{y}^2$ in $C([t_1,t_2];E)$. Now we prove that $\overline{y}^2\in Q^1(\overline{\varphi}^1)$.

\noindent As in Step 1 of the proof of Theorem \ref{existence} it is possible to prove that there exists
$\overline {f}^2\in L^1([t_1,t_2];E)$ such that $f_n^2\rightharpoonup \overline{f}^2\in L^1([t_1,t_2];E)$. Now, by Proposition \ref{closeness-2} we have
\begin{equation*}
\overline{f}^2\in P^{2,\varphi^1}_F(\overline{y}^2).
\end{equation*}
From Proposition \ref{p-t} and the fact that the function $I_1$ is continuous, by considering the limit in both sides of
(\ref{solution2}) we get
\begin{equation*}
\overline{y}^2(t) = U(t,t_1)[\varphi^1(t_1)+I_1(\varphi^1_{t_1})] + \int_{t_1}^t U(t,s)\overline{f}^2(s)\, ds  \quad ,\ t\in [t_1,t_2],
\end{equation*}
that is $\overline{y}^2\in {\Sigma^2_{\overline{\varphi}^1}}\lfloor_{[t_1,t_2]}=Q^1(\overline{\varphi}^1).$
\noindent The fact that $y_n^2\rightarrow \overline{y}^2\in Q^1(\overline{\varphi}^1)$ leads a contradiction with (\ref{notusc}). Therefore, by applying well known results on composition and cartesian product of multimaps (see e.g. \cite{KOZ}, Theorem 1.2.12 and Theorem 1.2.8), we can conclude that $H^1$ is
u.s.c.. Hence, the set $H_1(\Sigma^1_{\varphi^0})$ is compact, being the image of an upper semicontinuous multimap of the compact set $\Sigma^1_{\varphi^0}$. In conclusion the set
$$
\bigcup_{\varphi^1 \in \Sigma^1_{\varphi^0}} \Sigma^2_{\varphi^1}=H^1(\Sigma^1_{\varphi^0})
$$
is compact.

\noindent By iterating this process we obtain the compactness of the
solution set $\Sigma$ (cf. (\ref{sigma})) on the whole interval $(-\infty,T]$.
\end{proof}

As a consequence of the compactness result Theorem \ref{t:comp}, we can establish the existence of optimal solutions to problem $(P)$. In other words, we are establishing the minimization or maximization of a cost functional associated with problem $(P)$.

\begin{theorem}
\label{t:opt}
Assume the same hypotheses as Theorem \ref{t:comp} and let ${\mathcal J}:PC_\T ([t_0,T];E)\to \erre$ be a cost functional for $(P)$.
\\
If ${\mathcal J}$ is lower semicontinuous, then there exists a mild solution $y_*$ of $(P)$ such that 
\[{\mathcal J}(y_*)=\min_{y\in{\mathcal S}} {\mathcal J}(y);\]
if ${\mathcal J}$ is upper semicontinuous, then there exists a mild solution $y^*$ of $(P)$ such that 
\[{\mathcal J}(y^*)=\max_{y\in{\mathcal S}} {\mathcal J}(y),\]
where ${\mathcal S}$ is the set of all the mild solutions of $(P)$.
\end{theorem}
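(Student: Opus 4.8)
The plan is to reduce the statement to the classical Weierstrass extreme value principle in its semicontinuous form: a lower semicontinuous real-valued function on a compact set attains its minimum, and, dually, an upper semicontinuous function on a compact set attains its maximum. The decisive input is already available, namely the compactness of the solution set ${\mathcal S}=\Sigma$ as a subset of $S((-\infty,T];E)$, established in Theorem \ref{t:comp} precisely under the hypotheses assumed here. So the whole argument rests on that theorem together with the semicontinuity of ${\mathcal J}$.

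First I would address the mismatch between the domains: the mild solutions live in $S((-\infty,T];E)$, whereas the cost functional ${\mathcal J}$ is defined on $PC_\T([t_0,T];E)$. To bridge this, consider the restriction operator $R:S((-\infty,T];E)\to PC_\T([t_0,T];E)$, $R(y)=y\lfloor_{[t_0,T]}$. Directly from the definition of the seminorm $\|\cdot\|_S$ one has $\|R(y)\|_\infty\le\|y\|_S$, so $R$ is $1$-Lipschitz and in particular continuous. Since the composition of a continuous map with a lower (respectively upper) semicontinuous function is again lower (respectively upper) semicontinuous, the functional ${\mathcal J}\circ R$ is lower (respectively upper) semicontinuous on $S((-\infty,T];E)$, hence on the compact subset ${\mathcal S}$; this is exactly the object to which the extremal argument applies, with ${\mathcal J}(y)$ read as $({\mathcal J}\circ R)(y)$.

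Next I would carry out the extremal argument in the lower semicontinuous case. Set $m=\inf_{y\in{\mathcal S}}({\mathcal J}\circ R)(y)$ and choose a minimizing sequence $\{y_n\}\subset{\mathcal S}$ with $({\mathcal J}\circ R)(y_n)\to m$. Because $S((-\infty,T];E)$ is a seminormed, hence pseudometric, space, the compactness of ${\mathcal S}$ yields sequential compactness, so that, passing to a subsequence, $y_n\to y_*$ for some $y_*\in{\mathcal S}$. Lower semicontinuity then gives $({\mathcal J}\circ R)(y_*)\le\liminf_n({\mathcal J}\circ R)(y_n)=m$, while $m\le({\mathcal J}\circ R)(y_*)$ holds by definition of the infimum; hence ${\mathcal J}(y_*)=m=\min_{y\in{\mathcal S}}{\mathcal J}(y)$. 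The upper semicontinuous case is entirely symmetric: one either repeats the argument with a maximizing sequence and $\limsup$, or applies the case just treated to $-{\mathcal J}$, which is lower semicontinuous, thereby producing the maximizer $y^*$.

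I do not expect a genuinely hard step here: once Theorem \ref{t:comp} is in hand, the proof is the standard Weierstrass argument. The only two points that deserve care are the harmless domain reduction through the continuous restriction $R$ (so that semicontinuity of ${\mathcal J}$ transfers to the space where the solutions actually live), and the observation that in the seminormed setting compactness of ${\mathcal S}$ still delivers the convergent subsequence required; neither presents a real obstacle.
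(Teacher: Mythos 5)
Your proof is correct and coincides with the paper's intended argument: the paper states Theorem \ref{t:opt} without proof, presenting it as an immediate consequence of the compactness of the solution set from Theorem \ref{t:comp} via the standard Weierstrass semicontinuity principle, which is exactly what you carry out. Your explicit treatment of the domain mismatch through the continuous restriction operator $R$ and of sequential compactness in the seminormed setting supplies details the paper leaves implicit, but introduces nothing beyond the route the authors clearly have in mind.
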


\section{Application}
\label{s-A}

In this section, we provide an example of an application of our result on optimal trajectories for an energy functional associated with an impulsive problem with infinite delay. The example is taken from population dynamics theory and has already been studied in other settings by some authors (see, for instance, \cite{EzSt25, Ru26}); however, it can serve as a prototype for many other possible applications, such as heat diffusion equations or flexible robotic arm models. More generally, it applies to all situations in which the differential equation describing the phenomenon under consideration can be written as the sum of a linear part (e.g., the Laplacian or the biharmonic operator) and a nonlinear part depending on the past history of the solution trajectory.

\subsection{Optimal solutions to the populations dynamics model with impulses and infinite delay}

~

We deal with a population evolution model in which the present state of the system depends on its entire history and allow for the presence of external forces acting instantaneously and depending on the past evolution of the population itself. 

We seek the existence (in the mild sense) of optimal solutions for an energy functional associated to the impulsive initial value problem with feedback controls 
\begin{equation}
\label{p-appl}
\begin{cases}
\frac{\partial }{\partial t}u(t,x)= -b(t,x)u(t,x) + g\left(t,u(t,x),\int_{-\infty}^0 u(t+\theta,x)d\theta\right)+ \omega(t,x), 
\\ \hfill \ t \in [t_0,T]\setminus\{t_1,\dots, t_m\}, \mbox{ a.e. } x\in  [0,1],\\ 
\omega(t,\cdot)\in \Omega(u(t,\cdot)),\  t \in [t_0,T],\\
\\
u(t,x)=\psi^*(t,x),\, t \in (-\infty,t_0], \mbox{ a.e. } x\in [0,1],\\
\\
u(t_k^+,x)=u(t_k,x) + \mathcal{I}_k \left( \int_{-\infty}^0 u(t_k+\theta,x)d\theta \right) ,\mbox{ a.e. } x\in [0,1], \, k=1,\dots, m,
\end{cases}
\end{equation} 
where: $u(t,x)$ the population density at time $t$ and place $x$; $b:[t_0,T]\times [0,1]\to \erre^+$ is the removal coefficient (involving death rate and migration); $g:[t_0,T]\times \erre\times \erre\to \erre$ represents the nonlinear population development law; $\{t_0,\dots,t_m\}$ is a given subset of $[0,T]$ with $t_0<t_1<\dots<t_m$;
$\Omega:L^2([0,1])\to \P(L^2([0,1]))$ is the multivalued mapping that yields the feedback controls $\omega$;
the function $\psi^*:(-\infty,t_0]\times [0,1]\to \erre$ is the initial datum.

Let ${\mathpzc T}=\{t_0,\dots,t_m\}$. Consider the separable Banach space 
\begin{equation}
\label{e-E}
    E:=L^2([0,1])
\end{equation}
with its usual norm $\|v\|_E^2=\int_0^1 v^2(t)\, dt$, and the corresponding generalized phase space $\B_{\tau}$ (see Section \ref{ss-Btau}).

We assume that the trajectories $u$ are such that $u(t,\cdot)\in E$ for every $t\le T$ and that the initial datum $\psi$ is such that $t\mapsto \psi^*(t,\cdot)$ belongs to $\B_{\tau}$.

\begin{proposition}(cf. \cite[Proposition 4.1]{R22})\label{p:A}
Assume that the function $b:[t_0,T] \times [0,1]\to \erre$ satisfies properties 
\begin{itemize}
\item[(b1)] $b$ is measurable;
\item[(b2)] there exists $s\in L^1([t_0,T])$ such that
$$
0<b(t,x)\le s(t)\, ,\ \mbox{for every } t\in [t_0,T], \mbox{ a.e. } x\in [0,1];
$$
\item[(b3)] for every $x\in [0,1]$, the function $b(\cdot,x):[t_0,T]\to\erre^+$ is continuous.
\end{itemize}
Then, the family $\{A(t)\}_{t\in [t_0,T]}$, $A(t):E\to E$, $t \in [t_0,T]$, defined by 
\begin{equation}\label{e:Ab}
A(t)v(x)=-b(t,x)v(x),\ v\in E,\, x\in [0,1],
\end{equation} 
satisfies property (A).
\\
Moreover, the (noncompact) evolution system generated by  $\{A(t)\}_{t\in [t_0,T]}$ is given by
\begin{equation*}
\label{e:Uv}
[U(t,s)v](x)= e^{\int_s^t-b(\sigma,x)d\sigma}v(x),
\end{equation*}
for every $ t_0\le s\le t\le T , \, v\in E,\,  x\in [0,1]$.
\end{proposition}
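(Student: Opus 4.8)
The plan is to verify that the family $\{A(t)\}_{t\in[t_0,T]}$ defined by the multiplication operator $A(t)v(x)=-b(t,x)v(x)$ generates the candidate evolution system $[U(t,s)v](x)=e^{\int_s^t -b(\sigma,x)\,d\sigma}v(x)$, and that this $U$ satisfies the two defining axioms of an evolution system recalled before equation \eqref{D}. Since everything here is diagonal (pointwise multiplication in $x$), the whole argument reduces to elementary facts about the scalar exponential, handled uniformly in $x$ via the $L^1$-bound $s(\cdot)$ from (b2).

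First I would check that each $U(t,s)$ is a well-defined bounded linear operator on $E=L^2([0,1])$. Because $b\ge 0$ by (b2), for $t_0\le s\le t\le T$ the exponent $\int_s^t -b(\sigma,x)\,d\sigma\le 0$, so $0<e^{\int_s^t -b(\sigma,x)\,d\sigma}\le 1$ for a.e. $x$; hence $\|U(t,s)v\|_E\le\|v\|_E$, giving boundedness with $\|U(t,s)\|_{L(E)}\le 1$ (which also recovers the constant $D$ in \eqref{D}). Measurability of $(\sigma,x)\mapsto b(\sigma,x)$ from (b1) together with integrability from (b2) ensures the exponent is a genuine $L^\infty$ multiplier, so $U(t,s)v\in E$. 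Linearity is immediate. Next I would verify axiom (1): $U(s,s)=I$ is clear since the integral vanishes, and the evolution (Chapman--Kolmogorov) identity $U(t,r)U(r,s)=U(t,s)$ for $t_0\le s\le r\le t\le T$ follows pointwise from the additivity of the integral, $\int_r^t+\int_s^r=\int_s^t$, combined with the multiplicativity of the exponential.

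The remaining and slightly more delicate point is axiom (2), the strong continuity of $(t,s)\mapsto U(t,s)$ on $\Delta$. For fixed $v\in E$ I would show $U(t_n,s_n)v\to U(t,s)v$ in $E$ whenever $(t_n,s_n)\to(t,s)$ in $\Delta$. Continuity of the exponents for a.e. fixed $x$ uses (b3) (continuity of $b(\cdot,x)$) so that $\int_{s_n}^{t_n}b(\sigma,x)\,d\sigma\to\int_s^t b(\sigma,x)\,d\sigma$, whence the scalar multipliers converge pointwise in $x$; the uniform bound $e^{\int_{s_n}^{t_n}-b(\sigma,x)\,d\sigma}\le 1$ then lets me pass to the limit in the $L^2$-norm by dominated convergence, using $|U(t_n,s_n)v(x)-U(t,s)v(x)|^2\le 4|v(x)|^2\in L^1([0,1])$ as the dominating function. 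This dominated-convergence step, and making sure it is uniform enough in $x$ to conclude norm convergence rather than merely pointwise convergence, is the main obstacle; it is what forces the joint use of (b1)--(b3).

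Finally, I would record the two structural facts that make this fit property (A): the common domain $D(A)=E$ is dense (indeed all of $E$), and the operators are bounded multiplication operators so no unbounded-generator subtleties arise, matching the framework of assumption (A). The non-compactness claim follows because $U(t,s)$ is a multiplier by a function bounded away from $0$ on sets of positive measure (it never crushes the infinite-dimensional space $E$ to a relatively compact image), but since property (A) does not require compactness this remark needs no further argument beyond exhibiting, for instance, an orthonormal sequence whose image stays bounded away from $0$. I would present the computation as a direct verification closely following \cite[Proposition 4.1]{R22}, emphasizing that all estimates are uniform in $x$ thanks to \eqref{D} and the bound in (b2).
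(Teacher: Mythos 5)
Your verification is correct. The paper itself offers no proof of this proposition --- it defers entirely to the citation \cite[Proposition 4.1]{R22} --- and your direct computation (boundedness of the multiplier via $0<b\le s(t)$ so that the exponential lies in $(0,1]$, the identity and Chapman--Kolmogorov properties from additivity of the integral in the exponent, and strong continuity on $\Delta$ via (b3) plus dominated convergence with dominating function $4|v(x)|^2$) is exactly the standard argument one expects behind that citation. The only point worth making explicit, which you gesture at but do not carry out, is the sense in which $\{A(t)\}$ \emph{generates} $U$: since each $A(t)$ is a bounded multiplication operator with $D(A)=E$, it suffices to note that for a.e.\ fixed $x$ the map $t\mapsto e^{\int_s^t-b(\sigma,x)\,d\sigma}$ is differentiable with derivative $-b(t,x)e^{\int_s^t-b(\sigma,x)\,d\sigma}$ by (b3), so $U(t,s)v$ solves $y'=A(t)y$, $y(s)=v$ pointwise in $x$; this is a one-line addition and does not affect the validity of your argument.
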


\begin{definition}
\label{d-stc}
We say that a mild solution of \eqref{p-appl} is a  pair $(u,\omega)$, mild trajectory  $u:(-\infty,T] \times [0,1]\to \erre$  and control $\omega:[t_0,T]\times [0,1]\to \erre$, such that 
\begin{align}
i) \ &u(t,x)=e^{\int_{t_0}^t-b(\sigma,x)d\sigma} \psi^*(t_0,x) + \displaystyle \sum_{t_0 < t_j < t} e^{\int_{t_j}^t-b(\sigma,x)d\sigma} {\mathcal I}_j\left( \int_{-\infty}^0 u(t_j+\theta,x)d\theta \right) +\\
  & \hspace{1,5cm}  + \int_{t_0}^t e^{\int_s^t-b(\sigma,x)d\sigma} g\left(s,u(s,x),\int_{-\infty}^0 u(s+\theta,x)\, d\theta\right)\, ds+\int_{t_0}^t e^{\int_s^t-b(\sigma,x)d\sigma}\omega(s,x)\,ds,\\
  & \hspace{11cm} (t,x)\in [t_0,T]\times [0,1],\\
ii) \ &\omega(s,\cdot)\in \Omega(u(s,\cdot)),\ a.e.\ s\in [t_0,T],\\ 
\\
iii) \ &u(t,x)=\psi^*(t,x),\ (t,x)\in (-\infty,t_0]\times [0,1].
\end{align}
    
\end{definition}

We assume that $g:[t_0,T] \times \erre\times\erre \to \erre$  satisfies the following properties:
\begin{itemize}
\item[(g1)] for every $t \in [t_0,T]$, the map $x\mapsto g\left(t, v(x),\int_{-\infty}^0 \varphi(\theta)(x)\, d\theta\right)$ belongs to $L^2([0,1])$, for every $v\in E,\, \varphi\in \B_{\tau}$;\sm
\item[(g2)] for every $p,q\in\erre$, the function $g(\cdot, p,q)$ is (strongly) measurable;\sm
\item[(g3)]  for a.e. $t \in [t_0,T]$, the function $g(t,\cdot,\cdot)$ is continuous;\sm
\item[(g4)] there exists  $h\in L^1_+([t_0,T])$ such that 
	$
	|g(t,p,q)|\le h(t), 
	$
	for a.e. $t \in [t_0,T]$ and every $p,q\in\erre$;\sm
\item[(g5)]
there exists $q\in L^1_+([t_0,T])$ such that,  for a.e. $t \ge 0$
$$
\left\|g\left(t, v_1(\cdot), \int_{-\infty}^0 \varphi_1(\theta)(\cdot)\, d\theta\right)-g\left(t, v_2 (\cdot), \int_{-\infty}^0 \varphi_2(\theta)(\cdot)\, d\theta\right)\right\|_E
\le q(t)\left( \|v_1-v_2\|_E + \|\varphi_1-\varphi_2\|_{\B_\tau} \right),
$$
 for all $v_1,v_2\in E,\, \varphi_1,\varphi_2\in \B_{\tau}$;\sm
 \item[(g6)]
 the map $t\mapsto \|g(t,0_E,0_{\B_{\tau}})\|_E$ belongs to $L^1_+([t_0,T])$.
\end{itemize}

We consider the next properties on the multifunction $\Omega:E\to \P(E)$:
\begin{itemize}
\item[($\Omega$1)] $\Omega$ takes compact convex values;\sm

\item[($\Omega$2)] $\Omega$ is upper semicontinuous;\sm

\item[($\Omega$3)] 
$\Omega$ is compact, i.e. maps bounded sets into relatively compact sets; \sm

\item[($\Omega$4)]   there exists $R>0$ such that  \ $	\|\Omega(v)\|_E\le R(1+\|v\|_E),
	$ for every $v\in E$.	
\end{itemize}

\begin{theorem}\label{t:appl}
Suppose that the mappings $b:[t_0,T] \times [0,1]\to \erre^+$,  $g:[t_0,T] \times \erre\times\erre \to \erre$, and $\Omega:E\to \P(E)$, respectively satisfy properties (b1)-(b3), (g1)-(g6), and ($\Omega$1)-($\Omega$4). 
Further,  suppose that  the functions ${\mathcal I}_1,\dots,{\mathcal I}_m$, are bounded and continuous.

Then, the feedback control population dynamics system \eqref{p-appl} admits an optimal solutions, that is a mild solution $(u,\omega)$ (see Definition \ref{d-stc}) with $u$ 
minimizing or maximizing a cost functional associated to the system, depending if this is lower or upper semicontinuous. 
\end{theorem}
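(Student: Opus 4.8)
The plan is to recast the concrete feedback-control problem \eqref{p-appl} as an instance of the abstract impulsive inclusion $(P)$ in the separable Banach space $E=L^2([0,1])$ equipped with the generalized phase space $\B_\tau$, and then to invoke the existence, compactness, and optimization machinery already in place, namely Theorems \ref{existence}, \ref{t:comp}, and \ref{t:opt}. The linear part is governed by the multiplication family $A(t)v(x)=-b(t,x)v(x)$, which by Proposition \ref{p:A} satisfies assumption (A) and generates the (noncompact) evolution system $U(t,s)$. The nonlinearity is modeled as the sum of a single-valued superposition term and the feedback multimap,
\[
F(t,v,\varphi) = \bigl\{\, N_g(t,v,\varphi) + \omega \; : \; \omega\in\Omega(v)\,\bigr\},\qquad
N_g(t,v,\varphi)(x):=g\!\left(t,v(x),\int_{-\infty}^0\varphi(\theta)(x)\,d\theta\right),
\]
while the impulse maps are $I_k(\varphi)(x):=\mathcal I_k\!\left(\int_{-\infty}^0\varphi(\theta)(x)\,d\theta\right)$. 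Since $\Omega(v)$ is compact and convex by $(\Omega 1)$ and translation by the fixed point $N_g(t,v,\varphi)$ preserves these properties, $F$ takes values in $\K v(E)$, so the target space of the abstract theory is respected.

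Next I would verify assumptions (F1)--(F4). Property (F1) follows by fixing any $\omega_0\in\Omega(v)$ and noting that $t\mapsto N_g(t,v,\varphi)+\omega_0$ is a strongly measurable $E$-valued selection, using $(g1)$--$(g2)$. Property (F2) is obtained by writing $F(t,\cdot,\cdot)$ as the sum of the continuous map $N_g(t,\cdot,\cdot)$ (continuity from $(g3)$ and the $L^2$-Lipschitz estimate $(g5)$) and the u.s.c. multimap $\Omega$ $(\Omega 2)$, a sum which is again u.s.c. The sublinear bound (F3) comes from combining the uniform bound $\|N_g(t,\cdot,\cdot)\|_E\le h(t)$ given by $(g4)$ with the growth estimate $\|\Omega(v)\|_E\le R(1+\|v\|_E)$ of $(\Omega 4)$, so one may take $\alpha(t)=h(t)+R$. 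For the regularity condition (F4) the feedback term contributes nothing because $\Omega$ is compact ($(\Omega 3)$ gives $\chi(\Omega(D_1))=0$ on bounded sets), while the Lipschitz estimate $(g5)$ controls the superposition term, yielding (F4) with $\mu(t)=q(t)$. Finally, the hypothesis that each $\mathcal I_k$ is bounded and continuous transfers to continuity of the abstract impulse maps $I_k:\B_\tau\to E$ through the history-integral operator $\varphi\mapsto\int_{-\infty}^0\varphi(\theta)\,d\theta$.

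With (A) and (F1)--(F4) in force and the $I_k$ continuous, Theorem \ref{existence} yields at least one mild solution and Theorem \ref{t:comp} gives compactness of the solution set $\mathcal S$ in $S((-\infty,T];E)$. Any abstract mild solution $y$ comes with a selection $f(s)=N_g(s,y(s),y_s)+\omega(s)$ with $\omega(s)\in\Omega(y(s))$, and setting $u:=y$, $\omega:=f-N_g(\cdot,y,y_\cdot)$ recovers a pair satisfying Definition \ref{d-stc}; thus $\mathcal S$ is precisely the set of mild solutions of \eqref{p-appl}. Applying Theorem \ref{t:opt} to the energy functional $\mathcal J$ over the compact set $\mathcal S$ then produces the minimizing (resp. maximizing) optimal solution when $\mathcal J$ is lower (resp. upper) semicontinuous.

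The delicate step is (F4): one must show that the superposition operator built from $g$ and the infinite-memory integral is $\chi$-regular despite the noncompactness of the evolution system. Here the decomposition of $F$ into a compact feedback part and a Lipschitz memory part is essential, together with controlling $\chi_{\B_\tau}$ of the history set by the pointwise supremum $\sup_\theta\chi(D_2(\theta))$ through the weighted structure of $\B_\tau$; verifying this bound, and checking that the history-integral operator is well defined and bounded on $\B_\tau$, is the technical crux of the argument.
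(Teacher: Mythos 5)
Your proposal follows essentially the same route as the paper: define the abstract maps $y$, $f$, $\varphi^*$, $I_k$, set $F(t,v,\varphi)=f(t,v,\varphi)+\Omega(v)$, recast \eqref{p-appl} as an instance of $(P)$, and invoke Theorem \ref{t:opt}. In fact you go further than the paper's own proof, which stops at ``the thesis follows from checking that the hypotheses of Theorem \ref{t:opt} are satisfied,'' whereas you actually sketch the verification of (F1)--(F4) (correctly, with the compactness of $\Omega$ absorbing the feedback term in (F4) and the Lipschitz bound (g5) handling the superposition term).
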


\begin{proof}
We define the next functions:
\begin{itemize}
\item 
$y:(-\infty,T]\to E$,
\begin{equation*}
\label{e-yu}
y(t)(x)=u(t,x), \, x\in [0,1];
\end{equation*}
\item 
$f:[t_0,T]\times E\times \B_{\tau}\to E$,
\begin{equation*}
f(t,v,\varphi)(x)=g\left(t,v(x),\int_{-\infty}^0 \varphi(\theta)(x)d\theta\right),\, x\in [0,1];
\end{equation*}
\item 
$\eta:[t_0,T]\to E$,
\begin{equation*}
\label{e:eo}
\eta(t)(x)=\omega(t,x), \,  x\in [0,1];
\end{equation*}
\item 
$\varphi^*:(-\infty,t_0]\to E$,
\[
\varphi^*(t)(x)=\psi^*(t,x),\, x\in [0,1];
\]
\item 
$I_k:\B_{\tau}\to E$,
\begin{equation*}
I_i(\varphi)(x)={\mathcal I}_i\left(\int_{-\infty}^0 \varphi(\theta)(x) \, d\theta\right),\, x\in [0,1].
\end{equation*}
\end{itemize}

Thus, problem \eqref{p-appl} can be rewritten as
\begin{equation*}
\begin{cases}
y'(t)= A(t)y(t) + f\left(t,y(t),y_t\right)+ \eta(t), 
\ t \in [t_0,T]\setminus\{t_1,\dots, t_m\}, \\ 
\\
\eta(t)\in \Omega(y(t)),\  t \in [t_0,T],\\
\\
y(t)=\varphi^*(t),\, t \in (-\infty,t_0], \\
\\
y(t_k^+)=y(t_k) + I_k \left( y_{t_k} \right) ,\, k=1,\dots, m,
\end{cases}
\end{equation*} 
which in turn is a problem of type (P), if we put 
\[F(t,y(t),y_t)=f(t,y(t),y_t)+\Omega(y(t)), \  t \in [t_0,T].\]

The thesis follows from checking that the above hypotheses ensure that the assumptions of Theorem \ref{t:opt} are satisfied.
\end{proof}

\np
{\bf Acknowledgement and funding.} 

This study was partly funded by the Unione europea - Next Generation EU, Missione 4 Componente C2 -  CUP Master: J53D2300390 0006, CUP: J53D23003920 006 - Research project of MUR (italian Ministry of University and Research) PRIN 2022  “Nonlinear differential problems with applications to real phenomena” (Grant Number: 2022ZXZTN2).

The authors are members of the ``Gruppo Nazionale per l'Analisi Matematica, la Probabilità e le loro Applicazioni'' (GNAMPA) of the Istituto Nazionale di Alta Matematica (INdAM).
\md

\np
{\bf Data availability:} The manuscript has no associated data.

\np
{\bf Conflicts of interest.} The author declares no conflict of interest.
\md

\end{document}